\newtheorem{theorem}{Theorem}
\newtheorem{definition}{Definition}
\newtheorem{lemma}{Lemma}
\newtheorem{proposition}{Proposition}
\newtheorem{example}{Example}
\newtheorem{remark}{Remark}
\newtheorem{corollary}{Corollary}
\newcommand{\C}{\mathbb{C}}
\newcommand{\Cb}{\mathbf{C}}
\newcommand{\F}{\mathcal{F}}
\newcommand{\M}{\overline{\mathcal{M}}}
\newcommand{\PC}{\mathbb{P}}
\newcommand{\Ps}{\mathcal{P}}
\newcommand{\Q}{\mathbb{Q}}
\newcommand{\R}{\mathbb{R}}
\newcommand{\cS}{\mathcal{S}}
\newcommand{\Ss}{\mathbb{S}}
\title[Modular operad and Galois group]{GENUS ZERO MODULAR OPERAD AND ABSOLUTE GALOIS GROUP}
\author{No\'emie C. Combe, Yuri I. Manin} 
\address{No\'emie C. Combe\\ Max Planck Institute for Mathematics \\ Vivatsgasse 7, 53111 Bonn}
\address{Yuri I. Manin\\ Max Planck Institute for Mathematics \\ Vivatsgasse 7, 53111 Bonn}
\date{} 
\begin{document}

\maketitle

\vspace{.3cm}
\begin{flushright}
\end{flushright}

\vspace{.6cm}

{\bf Abstract:} In this article, we develop the geometry of canonical stratifications of 
the spaces $\M_{0,n}$ and prepare ground for studying the action of the Galois group
$Gal ({\overline{\Q}} /\Q)$
upon strata. We define and introduce a version of a {\it gravity} operad constructed for a class of moduli spaces $\M_{0,n}$, equipped with a hidden holomorphic involution. This additional symmetry is associated to a split quaternionic structure. We introduce a categorical framework to present this object. Interaction between the geometry, physics and the arithmetics are discussed. An important feature is that 0-divisors of the split quaternion algebra imply additional singular points, and lead to investigations concerning the geometry and mixed Hodge structures.

\bigskip


\setcounter{tocdepth}{1}

\tableofcontents

\bigskip
\vfill\eject
\section{Introduction}~\label{S:Intro}

\bigskip

With the development of quantum field theory in theoretical physics,
the interaction of physics and mathematics intensified. As the survey by
M. Atiyah with co--authors~\cite{ADH} stressed, this interaction became 
a very rich source of new ideas in mathematics, in particular, in algebraic geometry.

\smallskip

This article is concerned with one
remarkable fruit of the interaction: creation of the theory of quantum
cohomology (cf.~\cite{KoMa}) and subsequent discovery of its connections
with one of the central objects of number theory, Galois group of the field
of all algebraic numbers (cf. \cite{BrHoRo},~\cite{CoMaMa},~\cite{Ih}, and references therein).

 A quantum cohomology operad has as its components cohomology spaces of moduli spaces $\M_{g,n}$ (or only $g=0$ case), and its structure morphisms correspond to well defined morphisms of these moduli spaces. Choosing for instance \'etale cohomology, the Galois group of algebraic numbers acts upon all operadic components compatibly with operadic morphisms, creating the connection from between theory of quantum cohomology and number theory. 

In this paper, we equip the moduli space $\M_{0,n+1}$ with an algebraic structure. This algebraic structure is motivated by physics and used in quantum theory~\cite{Bae,Va} in general relativity and gravity~\cite{Gog1,KuKoGev,Ul}. These additional algebraic structures imply symmetries.

From a more algebraic perspective, those physical requirements mean that to describe the geometry of the space we need a composition algebra. 
For physical reasons it is not necessary to use field extensions, quadratic extensions over real numbers are enough. 
 
In this paper, we will equip $\M_{0,n+1}$ with a given holomorphic involution $\theta$. The underlying algebra, over which are defined the corresponding modules is a composite normed algebra and in particular a split algebra. Here, the hidden symmetry is associated to split quaternion algebra. For an introduction to the relation between linear spaces corresponding to modules over an algebra, we refer to~\cite{Shi,Shu1,Shu2,Ros}. For manifolds defined over algebras, and verifying the Cauchy--Riemann equations, we refer to~\cite{Scha}, p.146. 

We think that working on spaces being realisations of modules over these split algebras offers an interesting model to consider that encodes also physical data. We introduce this construction in order to define what we call a NY-gravity operad for this model and investigate the geometry around it. Correction to global comment in beginning of section 1 of referee's remarks.

\smallskip

Below we briefly describe the content of our article and its place in a much wider
environment.

\bigskip

{\bf 1.1. Geometric approaches to the absolute Galois group.} 
The study of the absolute Galois group $Gal (\overline{\Q} /\Q)$
and its generalisations, Galois groups of the algebraic closures of various finitely generated
fields, holds a central place in algebraic geometry and number theory. Since the
works of A. Belyi, V. Drinfeld, Y. Ihara, A. Grothendieck, the main approach to
it consists in considering the tower of finite coverings of $\mathbb{P}^1$ ramified only over 
a fixed three points, say, $\{0,1,\infty \}$, and considering actions of two different
groups upon it: geometric one $\pi_1 (\mathbb{P}^1 \setminus \{0,1,\infty \})$,
and algebraic one, $Gal (\overline{\Q} /\Q)$, existing in view of Belyi's theorem.

The latter formed a key step allowing to bridge algebraic curves over $\Q$ and more combinatorial objects (the Grothendieck {\it dessins d'enfants}), by showing that any algebraic curve $X$ over $\Q$, admits a map $X \to \mathbb{P}^1$,
ramified at three points only, say $\{0, 1,\infty\}$. 

Now, considering the moduli space of genus $g$ curves with $n$ marked points $\M_{g,n}$, defined over $\Q$, its algebraic fundamental group $\pi_1^{et}(\Q\times_{\Q} \M_{g,n})$ has an action of $Gal(\overline{\Q}/\Q)$. This statement follows from a more general one in~\cite{SGA} (SGA 1, Exp. IX, Th\'eor\`eme 6.1):

 Let $V$ be a quasi-compact and quasi-separated scheme over $\Q$. Then, there is a short exact sequence
 \[1\to \pi_1^{alg}(V\otimes_{\Q} \overline{\Q})\to \pi_1^{alg}(V)\to Gal(\overline{\Q}/\Q) \to 1,\]
 of profinite topological groups. 

The goal of this article is to start replacing in this picture the tower of coverings of $\mathbb{P}^1$
by another geometric object: the {\it genus zero modular operad,} or, more precisely, its categorical version,
developed in our recent article~\cite{CoMa}.

To achieve this goal, we must suggest one more object, upon which $Gal ({\overline{\Q}} /\Q)$
acts, in this operadic environment. The simplest version of it consists of all points in 
$\M_{0,n}({\overline{\Q}})$, and respective maps induced by operadic morphisms,
but it is too trivial and too large for useful applications. 

Its much smaller and possibly interesting version consists of moduli points
of all {\it maximally degenerated genus zero stable curves with marked points} (strata of dimension zero).

 More generally, we develop here the geometry of canonical stratifications of 
the spaces $\M_{0,n}$, being indexed by dual graphs of the divisors , and prepare ground for studying the action of the Galois group
upon strata.

\bigskip

{\bf 1.2. Genus zero modular operad and its hidden symmetry.} The components
and composition morphisms of the genus zero
modular operad belong to the category of smooth projective manifolds. A naive way to describe
its $n$--ary component $\M_{0,n+3}$ is this. First, consider the moduli space of marked, pairwise distinct points, on the projective line:
it can be naturally identified with an obvious open subset in $(\mathbb{P}^1)^n$ (which is the complement of the discriminant variety).
Second, construct a compactification of this subset by adding as fibres ``stable'' curves of genus
zero with marked points that can be described as degenerations of the generic stable curve. 

\smallskip

Since we mentioned points $ \{0,1,\infty\}$, we have implicitly introduced in this description
the coordinate $t$ on $\mathbb{P}^1$ mentioned above, and thus we can extend the involution
$t\mapsto 1-t$ to induce it upon $\mathcal{M}_{0,n+3}$, and then to
$\M_{0,n+3}$, for $n\geq 0$.

\smallskip

Actually, in this context there is a better way to define the rigidification
involving $t$. To make explicit the geometry behind it, imagine first $\mathbb{P}^1(\mathbb{C})$
as a topological sphere $S^2$
endowed with one complex structure and three ``equators'' $S^1_j\subset S^2$
in general position. For each $j$, we can introduce a complex coordinate $t_j$
upon $S^2$ identifying $S^1_j$ with a naturally oriented $\mathbb{P}^1(\mathbb{R})$.
Then the whole symmetry group behind this rigidification
 will be generated by $t_j\mapsto 1-t_j$, and later it can be extended to the
whole group of symmetries of the modular operad
 of genus zero. This is what we can call its hidden symmetry or stack symmetry. The same group
 is used in [I] and elsewhere in order to treat the cycle relations in the
 Grothendieck--Teichm\"uller group.

 \smallskip
 
In the main part of this article, we restrict ourselves to the study of this {\it hidden involution}
and the {\it stacky} quotient operad with respect to it.

The total automorphism groups of $\overline{M}_{0,n+1}$ were determined by A. Bruno and M. Mella~\cite{BrMe}, using Kapranov’s description of $\M_{0,n}$ as the closure of a subscheme of a given Hilbert scheme. 
In our paper \cite{CoMa} we show how one can use their results
in order to encode the total structure of symmetries of genus zero modular operad
in two different combinatorial structures.

 \medskip

Our principal results here, Theorems 1, 2, and 3, can be interpreted as showing ways
to various objects upon which the absolute Galois group acts,
such as points of maximum degeneration mentioned in the Abstract above.
The structures connecting various components of all these objects into a unity
are again of operadic type, and important role among them is played by $NY$--operad, taking into account also complex conjugation and split-quaternonic structure. Finally, in the two last sections we investigate the geometry around the singular points of this moduli space with symmetries and mixed Hodge structures. It is particularly interesting since additional singular points arise, on this real realisation, from the 0-divisors of the split quaternion algebra.

 
 

 
 \bigskip
 
 \centerline{\bf Acknowledgements}
 
 \medskip
 
 We are very grateful to the Max Planck Institute for Mathematics,
 for hospitality, support, and excellent working conditions.
 
We are grateful to L. Schneps and P. Lochak who kindly answered our questions regarding
 action of the Grothendieck--Teichm\"uller group.

We thank the anonymous reviewer who carefully read the previous draft of the paper
and suggested several useful corrections and editorial changes that we
took care of. This allowed to highlight ideas that were not explicitly presented in the first version. 


 
\section{The stack category of stable labeled curves}\label{S:Gr}
In this new section, we are brought to the notion of {\it groupoids}, which is particularly well suited to the exposition of our results. 
Groupoids are used here in the context of (pre)stable curves. 
 
A {\it prestable curve} over a scheme $T$ is a flat proper morphism $\pi:C\to T$ whose geometric fibers are reduced one-dimensional schemes with at most ordinary double singular points. Its genus is a locally constant function on $T$: $g(t):=dimH^{1}(C_t,\mathcal{O}_t)$. 
Let $S$ be a finite set. An $S$-pointed prestable curve over $T$ is a family, where $\pi:C\to T$ is a prestable curve and $x_i$ are sections such that for any geometric point $t$ of $T$, we have $x_i(t) \neq x_j(t)$. 

Such a curve is {\it stable} if it is connected and if the normalization of each irreducible component, which has genus zero, carries at least three special points. 
Let $(C,\pi,x_i|i\in S)$ be an $S$-pointed prestable curve. It is stable if and only if automorphism groups of its geometric fibers are finite, and if there are no infinitesimal automorphisms.  

\subsection{Groupoids: general setting}~\label{s:F}
For the reader’s convenience, let us briefly recall the notions of groupoids and stack of groupoids such as presented in \cite{Ma99} Chap. 5 paragraph 3.2. 
Let $\F$ and $\cS$ be two categories and let $b:\F\to \cS$ be a functor. For $F\in Ob(\F)$ such that $b(F)=T$, we will call $F$ a family with base $T$, or a $T$-family.

\subsubsection{Condition for groupoids}

In order to form a groupoid, the data must satisfy the following condition. 

First, for any base $T\in \cS$, any morphism of families over $T$ inducing identity on $T$ must be an isomorphism. Designate by the symbol $\F_T$ this subcategory of $\F$. It will be called the fiber of $\F$ over $T$. 
 
Now, for any morphism between bases $\phi: T_1\to T_2$ there must exist a base change functor $\phi^*:\F_{T_2}\to \F_{T_1}$ such that for a pair of composable morphisms $\phi, \psi$ there exists a functor isomorphism $(\phi\circ \psi)^*\cong \psi^*\circ \phi^*$. The latter must satisfy the cocycle condition expressing associativity.



\subsubsection{1--morphisms of abstract groupoids} 

We will be considering only morphisms between groupoids over the same category of bases $\cS$. Such a morphism $\{b_1:\F_1\to \cS\}\to  \{b_2:\F_2\to \cS\}$ is a functor $\Phi:\F_1 \to \F_2$ such that $b_2\circ \Phi=b_1$. 
\begin{center}
\begin{tikzpicture}[->,>=stealth',shorten >=1pt,auto,node distance=1.5cm,main node/.style={font=\sffamily\bfseries}]
\node[main node] (1) {$\F_1$};
 \node[main node] (2) [right of=1] {};
\node[main node] (3)[right of=2] {$\F_2$};
 \node[main node] (4)[below of=2]{$\cS$};
  
 \path[every node/.style={font=\sffamily\small}]
   (1) edge node[above] {$\Phi$} (3)
    edge node [left] {$b_1$} (4) 
   (3)edge node[right] {$b_2$} (4);
\end{tikzpicture}
\end{center}

\subsection{Groupoids of $S$--labeled stable curves}\,

We consider the categories $\cS=Sch_{\mathbb{Q}}$ of schemes over $\mathbb{Q}$ and of  $S$-labeled stable curves $\F=\overline{\mathcal{M}}_{g,S}$; the genus 0 case is denoted by $\overline{\mathcal{M}}_{S}$. For $T\in\cS$, an object $\F_T$ is a stable $S$--labeled curves over the scheme $T\in \cS$. These $S$-pointed curves are given by $n$-sections $\{s_1,s_2,\dots, s_n\}$. 

A morphism $(C_1/T_1,x_{1i}|i \in S)\to$ $ (C_2/T_2,x_{2i} | i \in S)$ is a pair of compatible morphisms $\phi:T_1\to T_2$ and $\psi:C_1\to C_2$ such that $\psi$ induces an isomorphism of labeled curves $ C_1\to \phi^{*}(C_2)$. Equivalently, the diagram

\begin{center}
\begin{tikzpicture}[->,>=stealth',shorten >=1pt,auto,node distance=1.25cm,main node/.style={font=\sffamily\bfseries}]
\node[main node] (1) {$C_1$};
 \node[main node] (2) [right of=1] {};
\node[main node] (3)[right of=2] {$C_2$};
\node[main node] (4)[below of=1] {$T_1$};
 \node[main node] (5) [right of=4] {};
\node[main node] (6)[right of=5] {$T_2$};  
 \path[every node/.style={font=\sffamily\small}]
   (1) edge node[above] {$\psi$} (3)
   edge node[above] {$$} (4)
   (3)edge node[above] {$$} (6)
   (4)edge node[below] {$\phi$} (6);
\end{tikzpicture}
\end{center}
is cartesian, and induces the bijection of the two families of $S$--labeled sections.

\subsection{Groupoids of universal curves}
Let us consider the groupoid of universal curves $\mathcal{C}_g$.
The objects of $\mathcal{C}_g$ are stable curves $(C/T,x_{i}|i\in S)$, endowed with an additional section $\tilde{\Delta}:T \to C$ not constrained by any restrictions. The morphisms must be compatible with this additional data.

 \subsection{Stacks}
 
\begin{definition}[5.1 in~\cite{Ma99}, 4.1 in~\cite{DM}]~\label{DM}
A stack of groupoids is a quadruple \[(\F,\cS,b:\F\to \cS, \text{Grothendieck topology}~ \mathcal{T}~ on ~\cS)\]
satisfying the following conditions:

\begin{enumerate}
\item $b:\F\to \cS$ is a groupoid (such as defined in section \ref{s:F}). Each contravariant representable functor $S^{op}\to Sets$ is a sheaf on $\mathcal{T}$.
\item Any isomorphism between families over a given base is uniquely defined by its restrictions to the elements of any covering of the base. Given $X_1,X_2$ over $T$, the functor $T'\mapsto Iso_{T'}(X_1\to X_2)$ 
\item Any family over a given base is uniquely defined by its local restrictions. 
\end{enumerate}
\end{definition}

The stacks over $T$ are the objects of a 2--category (stacks/$T$): 1--morphisms are functors from one stack to another, and 2-morphisms are morphisms of functors. In this 2--category every 2--morphism is an isomorphism.

\begin{remark} $\M_{0,S}$ are algebraic Deligne--Mumford (DM) stacks over $\Q$ with respect to some Grothendieck topology (see also paragraph 3.1) and~\cite{DM}. The Grothendieck--Teichm\"uller set-up appears deeply intertwined within the situation where we have added an extra quadratic extension using $\theta$, creating an analog of field extension. \end{remark}

\section{Operads}\label{S:Op}
The notation 
$\mathcal{M}_{g,n}$ stands for pointed curves (with respect to $n$ disjoint sections). 
For general categorical background and stacks we refer to~\cite{KaScha}, Chap. 1, 4, 16, 19.

\smallskip 

 In our line of sight, we aim at constructing a topological operad, obtained by endowing the moduli space $\M_{0,n+1}$ with a symmetry, denoted by $\theta$. The initial motivation for choosing as a symmetry the holomorphic involution $\theta$ was that it does not produce any problems related to orientability. 
However, equipping our space with such a symmetry impacts the underlying algebra.

More precisely, in physics, to express the transition from a manifold (which is a result of some given measurements) to geometry one must be able to introduce a distance between some objects and an etalon for their comparison - an etalon corresponds to a unit in the algebra. In algebraic language all the physical requirements mean that to describe the geometry we need a {\it composition algebra}. 

Although, up to now, the division algebras such as split algebras were mostly investigated in quantum theory \cite{Bae,Va}, we think that working on spaces being realisations of modules over these split algebras offers an interesting model to consider. Therefore, this paper is based on considerations of split algebras and their applications. In particular, here, the hidden symmetry is associated to split quaternion algebra. 

This algebra is a composite, normed algebra with zero divisors. Zero divisors of the split algebras play an important role in the description of mirror symmetries and super symmetries \cite{BaeHu1,BaeHu2, KuTo}. This split quaternion algebra is non-commutative, associative, non-division ring, isomorphic to the ring of $2\times2$ real matrices. This is a model often used for space-time general relativity and gravity. We think it is an interesting model to consider.

We will study in section\, \ref{s:sdm} this hidden symmetry of $\M_{0,n+1}$ in a categorical framework. 
\subsection*{Operad in a category}
Let $({\bf C}, \otimes, 1_{C},a, l, r, \tau)$ be a symmetric monoidal category. Let {\bf Fin} be the category of finite sets with bijections. Given any subset $X\subset Y$, we use the notation $Y/X:= Y\setminus X\sqcup \{*\}$.

\begin{definition}[Operad,~\cite{MaVa} Definition 4.1]\label{D:Op}
An operad $\mathcal{P}$ in {\bf C} is a presheaf $\mathcal{P}: Fin^{op}\to {\bf C}$ endowed with  partial operadic compositions $\circ_{X\subset Y}$ or any $X\subset Y\in Fin$ :
\[\mathcal{P}(Y/X)\otimes \mathcal{P}(X)\to \mathcal{P}(Y),\] for any $X\subset Y$ and a unit $\eta: 1_{{\bf C}}\to \mathcal{P}(\{*\})$ such that the following diagrams commute. 

\vspace{3pt}
\begin{center}
\begin{tikzpicture}[->,>=stealth',shorten >=1pt,auto,node distance=1.87cm,main node/.style={font=\sffamily\bfseries}]
\node[main node] (1) {$\scriptstyle\Ps(Z\setminus Y)\otimes\Ps(Y\setminus X))\otimes \Ps(X)$};
 \node[main node] (2) [right of=1] {};
  \node[main node] (3) [right of=2] {};
\node[main node] (4)[right of=3] {$\scriptstyle \Ps(Z\setminus Y)\otimes(\Ps(Y\setminus X)\otimes \Ps(X))$};
\node[main node] (5)[right of=4] {};
 \node[main node] (6) [right of=5] {};
\node[main node] (7)[right of=6] {$\scriptstyle \Ps(Z\setminus Y)\otimes \Ps(Y))$};  
 \node[main node] (8) [below of=1] { $\scriptstyle\Ps((Z\setminus Y)\setminus (Y\setminus X) \otimes\Ps(Y\setminus X))\otimes\Ps(X))$};
 \node[main node] (9) [below of=4] {$\scriptstyle \Ps(Z\setminus X)\otimes \Ps(X))$};
 \node[main node] (10) [below of=7] {$\scriptstyle\Ps(Z)$};
 \path[every node/.style={font=\sffamily\small}]
   (1) edge node[above] {$\scriptstyle\alpha$} (4)
   edge node[left] {$\scriptstyle\cong$} (8)
   (4)edge node[above] {$\scriptstyle id\otimes\circ_{X\subset Y}$} (7)
   (7)edge node[right] {$\scriptstyle \circ_{Y\subset Z} $} (10)
   (8)edge node[below] {$\scriptstyle \circ_{Y\setminus X\subset Z\setminus X}\otimes id$} (9)
   (9)edge node[below] {$\scriptstyle \circ_{Y\subset Z}$} (10);
\end{tikzpicture}
\end{center}

\vspace{3pt}
\begin{center}
\begin{tikzpicture}[->,>=stealth',shorten >=1pt,auto,node distance=1.8cm,main node/.style={font=\sffamily\bfseries}]
\node[main node] (1) {$\scriptstyle\Ps(Z\setminus X)\setminus Y)\otimes(\Ps(X)\otimes \Ps(Y))$};
 \node[main node] (2) [right of=1] {};
  \node[main node] (3) [right of=2] {};
\node[main node] (4)[right of=3] {$\scriptstyle \Ps((Z\setminus X)\setminus Y))\otimes(\Ps(Y)\otimes \Ps(X))$};
\node[main node] (5)[right of=4] {};
 \node[main node] (6) [right of=5] {};
\node[main node] (7)[right of=6] {$\scriptstyle (\Ps((Z\setminus X)\setminus Y)\otimes \Ps(Y))\otimes\Ps(X)$};  
 \node[main node] (8) [below of=1] { $\scriptstyle \Ps((Z\setminus X)\setminus Y )\otimes\Ps(X)\otimes\Ps(Y)$};
 \node[main node] (9) [below of=7] {$\scriptstyle \Ps(Z\setminus X)\otimes \Ps(X)$};
 \node[main node] (10) [below of=8] {$\scriptstyle(\Ps((Z\setminus Y)\setminus X ) \otimes\Ps(X))\otimes\Ps(Y)$};
 \node[main node] (11) [right of=10] {};
 \node[main node] (12) [right of=11] {};
 \node[main node] (13) [right of=12] {$\scriptstyle \Ps(Z\setminus Y)\otimes \Ps(Y) $};
 \node[main node] (14) [below of=9] {$\scriptstyle \Ps(Z)$};
 \path[every node/.style={font=\sffamily\small}]
   (1) edge node[above] {$\scriptstyle id\otimes \tau$} (4)
   edge node[left] {$\scriptstyle \alpha^{-1}$} (8)
   (4)edge node[above] {$\scriptstyle \alpha^{-1}$} (7)
   (7)edge node[right] {$\scriptstyle \circ_{Y\subset Z\setminus X}\otimes id$} (9)
   (8)edge node[left] {$\scriptstyle \cong $} (10)
    (9)edge node[right] {$\scriptstyle \circ_{X\subset Y} $} (14)
   (10)edge node[below] {$\scriptstyle \circ_{X\subset Z\setminus Y}\otimes id$} (13)
   (13)edge node[below] {$\scriptstyle \circ_{Y\subset Z}$} (14);
\end{tikzpicture}
\end{center}
 \end{definition}

From~\cite{MaVa} we know that the skeletal category of {\bf Fin} is the groupoid $\Ss$, whose objects are $\{1,2,...,n\}$ for any $n\in\mathbb{N}$. Morphisms are the elements of the symmetric groups $\Ss_{n}$. A presheaf on {\bf Fin} is thus equivalent to a collection $\{\mathcal{P}(n)\}_{n\in \mathbb{N}}$ of right $\Ss_n$-modules. In these terms the above structure of operad is equivalent to partial composition products 
$\circ_{i}:\mathcal{P}(n)\otimes \mathcal{P}(m)\to\mathcal{P}(n+m-1)$, for $1\leq i\leq n$, and a unit map $\eta:1_{{\bf C}}\to \mathcal{P}(1)$ satisfying the analogous axioms.

\subsection*{Operad in groupoids}
An operad in groupoids---also known as an operad in the category of small categories---is defined in three steps. Note that it consists of a {\it sequence} of small categories $\mathcal{P}(r), r \in \mathbb{N}$, each of which are equipped with a symmetric group action; a unit morphism $\eta : pt \to \mathcal{P}(1)$, and a composition product $\mu : \mathcal{P}(r)\times \mathcal{P}(n_1)\times \dots \times \mathcal{P}(n_{r}) \to \mathcal{P}(n_1 + \dots + n_{r})$, is formed in the category of categories. Classical identities expressed by diagrams (mainly concerning the associativity, equivariance) hold. 

\hspace{3pt}

More precisely, since the category of groupoids forms a {\it symmetric monoidal subcategory} of the category of small categories, an operad in groupoids can be defined as an operad in categories $\mathcal{P}$, of which components $\mathcal{P}(r)$ are groupoids. The composition structure of an operad in categories (resp. groupoids) can be defined by giving a collection of functors $\circ_{k}:\mathcal{P}(m)\times \mathcal{P}(n)\to \mathcal{P}(m+n-1)$, $k=1,...,m$ satisfying the equivariance, unit and associativity relations.

A morphism of operads in categories $f : \mathcal{P} \to \mathcal{Q}$ is a sequence of functors $f :\mathcal{P}(r) \to \mathcal{Q}(r)$ preserving the internal structures attached to operads. The category of operads in groupoids forms a full subcategory of the category of operads in categories.  
For operads in categories, we will naturally consider the operad morphisms of which all underlying functors $f: \mathcal{P}(r)\to \mathcal{Q}(r)$ are equivalences of categories. 

\subsection{Operads of stable $S$--labeled curves}

For a finite set $S$ denote by $\M_{g,S}$ the Deligne--Mumford stack classifying stable curves of genus $g$ and with marked points $(x_{i})_{i\in S}$ labeled by the finite set $S$. For any injective map $\phi: S \to S'$ of finite sets and any $g$ such that $\M_{g,S}\neq \emptyset$, there is a natural morphism of stacks $\M_{g,S'}\to \M_{g,S}$ called {\it stable forgetting map}, see \cite{Knu2}. 

Our attention focuses on the description of the genus zero modular operad (see~\cite{Man18}, section 4 for details). Take a finite set $S=\{0,1,...,n\}$ and equip it with the action of the symmetric group $\Ss_{n+1}$. Whenever needed the notation $[n]$ will be used to designate the set $\{0,1,...,n\}$.
 
 We have the following structure morphism defined point-wise by a glueing of the respective stable curves:

\begin{equation}\label{E:1}
\M_{0,k+1}\times\M_{0,m_1+1}\times\dots\times\M_{0,m_k+1}\to \M_{0,m_1+\dots +m_k+1}
\end{equation}

\subsection{Hidden symmetry of the DM--Stack}
 \subsubsection{}
 
 Consider the DM--stack $\M_{0,n+1}$. Choose a stable $S$--labeled curve $(C/T, \pi (x_i)\, |\, i\in S,\, |S|=n+1)$, where $T$ is a base scheme in the category $Sch_{\Q}$, $C$ is a scheme in the category $\F$ and $\pi: C\to T$ is a proper flat morphism. Recall, that the marked points are sections over $ t\in T$, where $t$ is a geometric point. The stack $\M_{0,n+1}$ has as $S$-fiber an $n+1$-pointed curve $C\to T$ with $n$ sections $T\to C$ with disjoint images. A section of $C$ is a morphism of $T$-schemes defined from $T$ to $C$ such that composed with $\pi$ one obtains the identity $Id_{T}$.

\smallskip 

We consider the contravariant functor $\M_{0,n+1}$ mapping a scheme $T$ to a collection of $n+1$--pointed curves of genus 0 over $S$. 
 Note that this stack description is the lax functor one; not the one implemented in part 2.
 
Knudsen~(\cite{Knu2}, section 1) shows that $\M_{0,n+1}$ is represented by a smooth complete variety $T_{n}$, together with a universal curve $(C,(x_i)_{\in S}) \to T_{n}$, where $(x_i)_{i\in S}$ are the universal sections. 

\smallskip 

The complement of the  removed *in the literature* ``fat diagonal'' $\Delta$ (i.e. locus of colliding points), is contained in $T_{n}$, as the open subset over which the morphism $\pi$ is smooth. This is the open set parametrizing $n+1$--pointed curves over $Spec(\C)$, for which the curve is $\mathbb{P}^1$. 


Consider the $S\cup \{*\}$--pointed stable curve, with sections $(x_i)_{i\in S\cup \{*\}}$ and having an additional section $x_{*}$. There exists (up to unique isomorphism) a unique $[n+1]$--pointed curve $\pi_{*}:C^{*}\to T_{n+1}$ with sections $(x'_i)_{i\in [n+1]}$ such that $C$ is the contraction of $C^{*}$ along $x'_{[n+1]}$ and such that $x'_{n+1}$ is send to the section $x_{*}$. The pointed curve $C^{*}$ is obtained from $C$ by an explicitly described blow-up, see \cite{Kee}, section 1, for details.
 
The important result is as follows. If $\M_{0,n+1}$ is a category fibered in groupoid $C\to T$, with sections $(x_i)_{i\in S}$, then: $\M_{0,n+2}$ is $\pi: (C, (x_i)_{i\in \{0,...,n+2\}})\to T_{n+1}$, with $ T_{n+1}=(C,(x_i)_{i\in \{0,...,n\}})$ and $(C,(x_i)_{i\in \{0,...,n+2\}})$ is a blow-up of \\$T_{n+1}\times_{T_{n}} T_{n+1}.$
 

\subsubsection{Symmetry of the DM--Stack}\label{s:sdm}
In real algebraic geometry, a classical approach is to endow the moduli space of genus 0 curves and marked points with a antiholomorphic involution $j: \C\to \C$ such that $j^2=Id_{\C}$. This method leads to real algebraic curves. However, using an antiholomorphic involution induces orientability problems, which we want to avoid. 

Instead, we introduce a {\it holomorphic} involution. This construction will lead to a split algebra structure. More precisely, a {\it split quaternionic structure} is considered. It is extensively used in general gravity and also appears in supersymmetry. Recently, this type of algebraic structures, attracted some attention in the domains of relativity and particle theories. Since the vector part of the split quaternions, represents the (2+1)-Minkowski space-time, and not the usual 3-dimensional Euclidean space \cite{Gog1}.

Consider an affine orientable symmetry group \[G=\langle\, \theta| \, \theta^2=Id\, \rangle,\] where for any $x\in \C$, \[\theta: x\mapsto 1-x;\]
and consider the representation of this group as follows. Let $C/T$ be an $n$-pointed stable curve:
 \[G\to \mathrm{Aut}(C/T,(x_{i})).\]


Let us consider the following example and its generalisation.
 Take a disc $\mathbf{D}$. It can be partitioned by a pair of orthogonal lines into 4 identical pieces (the quadrants) denoted $Q_i, (i=1,2,3,4)$. These quadrants are incident at a common vertex. We will denote a pair of adjacent quadrants $Q_{i_1}$ and $Q_{i_2}$ and their respectively diagonally opposite quadrants $Q'_{i_1}$ and $Q'_{i_2}$. 
This vertex is precisely the fixed point under involution.
The group action of $G$ on $\mathbf{D}$ maps any marked point $x(t)$ lying in a given quadrant $Q_i$ to a point $x’(t)$ lying in its diagonally opposite quadrant $Q’_i$.
This example can be easily generalised to an $n$-dimensional sphere $\mathbf{S}^n$. This sphere can, as well, be partitioned by $n$ mutually orthogonal sections into $2^n$ identical $n$-dimensional pieces, 
where $Q_k^n, k= 1,…, 2^{n-1}$ and $Q'^n_k, k=1,…,2^{n-1}$, are the partitioning pieces and $Q'$ are the diagonally opposite pieces to the $Q$ pieces. All the $2^n$ pieces are incident at a common vertex. 
This vertex is the fixed point under the involution.
The group action of $G$ on this sphere $\mathbf{S}^n$ maps any marked point $x (t) \in Q^n_i$ to the point $x’(t)\in Q’^n_i$ where $Q^n$ and $Q’^n$ are mutually diagonally opposite piece.

As a more down to earth approach, we can look at $\theta$ as an affine map acting on the set of $n$-sections as follows: \[A_{\theta}: \mathbb{C}^{n}\to \mathbb{C}^{2n}\]
\[(x_1,...,x_n)\mapsto (x_1,...,x_n,1-x_1,...,1-x_n).\]
In matrix notation one has: 
\[ 
(x_1,...,x_n) \left[\begin{smallmatrix} 
1& 0 & \dots &0 |&-1& 0 & \dots &0\\
0 & 1 & \dots &0|&0& -1 & \dots &0 \\
0& 0 &\dots & 1|&0& 0 & \dots &-1 
\end{smallmatrix}\right]_{(n\times 2n)}+\left(\left[\begin{smallmatrix} 0 & \dots & 0\\ 0 & \dots & 0\\ 0& \dots &0 \end{smallmatrix}\right]_{1\times n}, \left[\begin{smallmatrix} 0 & \dots & 0\\ 0 & \dots & 0\\ 0& \dots &0 \end{smallmatrix}\right]_{1\times n}\right)_{1\times 2n}
\]

Consider the graph function of $\theta\in \mathrm{Aut}(C/T,(x_{i}))$. Given the set of $S$--labeled points on $C/T$, consider a binary relation $\theta$ (endorelation) on $(C/T; (x_{i})_{i\in S})$. This binary relation is given by the holomorphic involution $\theta:x\mapsto 1-x$, which maps the $n$-tuple $(x_i)_{i\in S}$ of labeled marked points on $C/T$ to $(\theta(x_i))_{i\in S}$. The graph of the binary relation $\theta$ from $C/T$ to itself is formed from the pairs $({\bf x}, \theta({\bf x}))$ and the relation is functional and entire. For $(C/T, (x_i,\theta(x_i))_{i\in S})$ we have:

\vspace{3pt}
\begin{center}
\begin{figure}\label{F:dia}
\begin{tikzpicture}[->,>=stealth',shorten >=1pt,auto,node distance=1.8cm,main node/.style={font=\sffamily\bfseries}]
\node[main node] (1) {};
 \node[main node] (2) [right of=1] {};
\node[main node] (3)[right of=2] {$ C_{\{x_1,...,x_n\}\cup \{\theta(x_1),...,\theta(x_n)\}}$};
\node[main node] (4)[right of=3] {};
\node[main node] (5)[right of=4] {};  
 \node[main node] (6) [below of=1] { $ C_{\{x_1,...x_n\}}$};
 \node[main node] (7) [below of=3] {};
 \node[main node] (8) [below of=5] {$ 
 C_{\{\theta(x_1),...,\theta(x_n)\}})$};
 \node[main node] (9) [below of=7] {$T$};
 
 \path[every node/.style={font=\sffamily\small}]
 (3) edge node[above left] {$ \pi_{1}$} (6)
  edge node[above right] {$ \pi_{2}$} (8)
  edge [dashed,bend left=10] node[above] {$$} (9)
  (6) edge[bend left=7] node[above] {$ \theta$} (8)
  edge node[below left] {$ \pi^{-1}$} (9)
   (8)edge node[below right] {$ \pi\circ\theta^{-1}$} (9)
   edge [bend left=7]node[below] {$ \theta^{-1}$} (6);
\end{tikzpicture}
\caption{}
\end{figure}

\end{center}

More simply,
\begin{center}
\begin{tikzpicture}[->,>=stealth',shorten >=1pt,auto,node distance=1.8cm,main node/.style={font=\sffamily\bfseries}]
\node[main node] (1) {$ (C,(x_i)_{i\in S})$};
 \node[main node] (2) [right of=1]{};
 \node[main node] (3) [right of=2] {$(C,(\theta(x_i))_{i\in S})$};
\node[main node] (4)[below of=2] {$ T$};

 \path[every node/.style={font=\sffamily\small}]
   (1) edge node[above] {$ \theta$} (3)
    edge node [left] {$\pi$} (4) 
   (3)edge node[right] {$ \pi\circ \theta^{-1}$} (4);
\end{tikzpicture}
\end{center}

\subsection{Monad structure}
As defined in section 2.4, let us consider the category $\M_{0,S}$ of $S$--labeled stable genus 0 curves, the category $Sch_{\Q}$ of schemes over $\Q$ and the functor $b:\M_{0,S} \to Sch_{\Q}$. We will rely on notions of monads and endofunctors, such as defined in~\cite{MaL98}, Chap. VI.
 
 Consider an involutive endofunctor $\theta:\M_{0,S} \to \M_{0,S} $ i.e. an endofunctor verifying the following condition: \[\theta^2=\theta\circ \theta=Id_{\M_{0,S}}.\] This operation defines a monad in the category $\M_{0,S}$. 

The transformations $\eta:I_{\M_{0,S}}\xlongrightarrow{*}\theta$ and $\mu:\theta^2\xlongrightarrow{*}\theta$ are natural, and the following diagrams commute:
 
 \begin{center}
\begin{tikzpicture}[->,>=stealth',shorten >=1pt,auto,node distance=1.5cm,main node/.style={font=\sffamily\bfseries}]
\node[main node] (1) {$ \theta^3=\theta$};
 \node[main node] (2) [right of=1]{};
 \node[main node] (3) [right of=2] {$ Id_{\M_{0,S}}$};
\node[main node] (4)[below of=1] {$ Id_{\M_{0,S}}$};
\node[main node] (5)[below of=3] {$\theta $};

 \path[every node/.style={font=\sffamily\small}]
   (1) edge node[above] {$\theta_{\mu}$} (3)
    edge node [left] {$\mu_{\theta}$} (4) 
   (3)edge node[right] {$\mu$} (5)
(4)edge node[below] {$ \mu$} (5) ;
\end{tikzpicture}
\end{center}

\vspace{3pt}
\begin{center}
\begin{tikzpicture}[->,>=stealth',shorten >=1pt,auto,node distance=1.5cm,main node/.style={font=\sffamily\bfseries}]
\node[main node] (1) {$ \theta^3=\theta$};
 \node[main node] (2) [right of=1] {};
\node[main node] (4)[right of=2] {$ \theta^2=Id_{\M_{0,S}}$};
\node[main node] (5)[right of=4] {}; 
\node[main node] (7)[right of=5] {$ \theta Id_{\M_{0,S}}$};
 \node[main node] (8) [below of=1] { $ \theta$};
 \node[main node] (9) [below of=4] {$\theta$};
 \node[main node] (10) [below of=7] {$ \theta$};
 
 \path[every node/.style={font=\sffamily\small}]
 (1) edge node[above] {$ \eta\theta$} (4)
  edge node[left] {$ =$} (8)
  (4) edge node[above] {$ \theta\eta$} (7)
  (7)edge node[right] {$ =$} (10)
   (8)edge node[below] {$ =$} (9)
  (9) edge node[below] {$=$} (10);
\end{tikzpicture}.
\end{center}

In particular, we choose the $\theta$-algebra for the monad $\theta$ to be constructed on the model of the group action $G\times\M_{0,S}$, with the structure map $H: G\times \M_{0,S} \to \M_{0,S} $ verifying $H(g_1g_2,x)=H(g_1,H(g_2,x))$, $H(u,x)=x$. This leads to defining the category of $S$--labeled stable curves of genus 0, obtained by the action of $G$ on objects of $\M_{0,S}$, denoted by $\M_{0,S}^{\theta}$. 

In addition, this defines a commutative diagram in the flavour of diagram\, \ref{F:dia}, where we replace $C_{x_1,...,x_n}$ by $\M_{0,S}$, $C_{\theta(x_1),...,\theta(x_n)}$ by $\M_{0,S}^{\theta}$ and $T$ by $Sch_{\Q}$. The dashed arrow corresponds to $b:\M_{0,S} \times \M_{0,S}^{\theta}\to Sch_{\Q}$. This leads to the next proposition. 

\begin{proposition}
Let $\M_{0,S}$ be the category of $S$--labeled stable curves of genus 0 and $\M_{0,S} ^{\theta}$ the category of $S$--labeled stable curves of genus 0, obtained by the action of $G$ on objects of $\M_{0,S} $. 
Then, $b:\M_{0,S} \times \M_{0,S}^{\theta}\to Sch_{\Q}$ is a groupoid. 
\end{proposition}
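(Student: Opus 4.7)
The plan is to reduce the claim to two ingredients already available: first, the fact that $\mathcal{F}\to \mathcal{S}$ is a groupoid (the standard Deligne--Mumford groupoid of $S$-labeled stable genus-zero curves constructed above via Knudsen); second, the fact that $\rho$ is an involutive endofunctor with $\rho^2=\mathrm{Id}_\mathcal{F}$, and therefore an auto-equivalence of $\mathcal{F}$. First I would make the product $\mathcal{F}\times \mathcal{F}^{\rho}$ precise as the fibered product over $\mathcal{S}$: its objects are pairs $\bigl((C/T,(x_i)_{i\in S}),\,(C/T,(\rho(x_i))_{i\in S})\bigr)$ sharing a common base $T$, and its morphisms are pairs of morphisms in $\mathcal{F}$ covering the same arrow of $\mathcal{S}$. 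The base functor $b$ assigns to such a pair the common base $T$.

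Next I would observe that $\mathcal{F}^{\rho}$ inherits directly from $\mathcal{F}$ the structure of a groupoid over $\mathcal{S}$. Indeed, the affine involution $x\mapsto 1-x$ preserves the property of being a stable $S$-labeled genus-zero curve, preserves sections and base changes, and sends the base $T$ to itself; hence applying $\rho$ commutes with the projection $b$ and carries morphisms of $\mathcal{F}$ to morphisms of $\mathcal{F}^{\rho}$ bijectively. Consequently any morphism in $\mathcal{F}^{\rho}$ over $\mathrm{id}_T$ is an isomorphism, and base change is performed by pulling back exactly as in $\mathcal{F}$.

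I would then verify the groupoid conditions of Section~\ref{S:Gr} componentwise. For VERSION 1, a morphism in $\mathcal{F}\times \mathcal{F}^{\rho}$ inducing the identity on $T$ is a pair of morphisms, each of which is an isomorphism by the groupoid property of the respective factor; hence the pair is an isomorphism. For VERSION 2, given $\phi:T_1\to T_2$ in $\mathcal{S}$ and a family $(F_2,F_2')$ over $T_2$, I would pick lifts $F_1\to F_2$ in $\mathcal{F}$ and $F_1'\to F_2'$ in $\mathcal{F}^{\rho}$ (which exist since both factors are groupoids), and assemble them into a pair $(F_1,F_1')\to (F_2,F_2')$ lifting $\phi$ in the fibered product. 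Base change then exists componentwise.

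The main obstacle is book-keeping rather than substantive: one must check that applying $\rho$ to the collection of labeled sections does not violate stability or distinctness of marked points, so that $\mathcal{F}^{\rho}$ really sits inside the same ambient category with the same base functor $b$. The relevant points are precisely those where the graph relation $(x_i)_{i\in S}\mapsto (x_i,\rho(x_i))_{i\in S}$ produces pairwise distinct sections generically; away from the closed locus where some $x_i$ coincides with some $\rho(x_j)$, both families are stable over the same base, and the fibered product over $\mathcal{S}$ inherits the groupoid structure from its two factors. This is exactly what the diagrams in Section~\ref{s:sdm} encode, and the proposition follows.
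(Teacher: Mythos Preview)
Your proposal is correct and follows essentially the same route as the paper: transport the groupoid structure from $\mathcal{F}$ to $\mathcal{F}^{\rho}$ via the auto-equivalence $\rho$, then verify the groupoid axioms componentwise on the (fibered) product over $\mathcal{S}$. The paper's own proof is terser---it constructs the isomorphism $\rho:\mathcal{F}\to\mathcal{F}^{\rho}$ on objects and morphisms, and then checks only VERSION~1 (recall the definition in Section~\ref{S:Gr} requires only one of the two conditions), omitting both your verification of VERSION~2 and your discussion of stability and distinctness of the $\rho$-transformed sections.
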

\begin{proof}
Construct the isomorphism of categories 
${\bf \theta}: \M_{0,S} \to \M_{0,S}^{\theta}$, where any object of $\M_{0,S}^{\theta}$ is the image of one object $C/T$ by the map ${\bf \theta}\in \textrm{Aut}(C/T)$; and any morphism of $S$--labeled stable curves $c,c'\in\M_{0,S}$ is mapped to a morphism of $S$--labeled stable curves in $\M_{0,S}^{\theta}$: ${\bf \theta} f: {\bf \theta}c \to {\bf \theta} c'$.
The definition of this morphism induces a bijection of the two families of $S$--labeled curves in $\M_{0,S}$ and respectively in $\M_{0,S}^{op}$, in other words the following diagram is commutative:
\begin{center}
\begin{tikzpicture}[->,>=stealth',shorten >=1pt,auto,node distance=1.5cm,main node/.style={font=\sffamily\bfseries}]
\node[main node] (1) {$c$};
 \node[main node] (2) [right of=1]{};
 \node[main node] (3) [right of=2] {$\theta c$};
\node[main node] (4)[below of=1] {$ c^{\prime}$};
\node[main node] (5)[below of=3] {$\theta c^{\prime} $};
 
  \path[every node/.style={font=\sffamily\small}]
   (1) edge node[above] {$\theta$} (3)
    edge node [left] {$f$} (4) 
   (3)edge node[right] {$\theta f$} (5)
(4)edge node[below] {$ \theta$} (5) ;
\end{tikzpicture}
\end{center}
Now, we show that $(b,b'):\M_{0,S}\times \M_{0,S}^{\theta}\to Sch_{\Q}$ is a groupoid. Indeed, given a base $T \in Sch_{\Q}$, any morphism of families over $T$ inducing identity on $T$ is an isomorphism for $\M_{0,S}$ and, by the construction above for $\M_{0,S}^{\theta}$.
Hence, the {\it condition for groupoids} in section 2.1 is satisfied, and we have defined a groupoid. 
\end{proof}


\section{Operad for curves with $\theta$-symmetry}\label{S:OpSC}
In this section, we develop an operad for $n$-pointed genus 0 curves, endowed with the transformation $\theta$. Throughout the paper, we call this operad {\it NY operad}. 

The $S$--labeled stable curves, equipped with the bi-functor $``\otimes"$ form a monoidal category. In particular, there exists a well-defined topological operad $\mathcal{P}$ for the $S$--labeled stable curves of genus 0 (see previous section). By symmetry, the $S$--labeled stable $\theta$-curves (i.e. obtained by using the involution $\theta$) form also a monoidal category and lead to a well-defined topological operad $\mathcal{Q}$ for the $S$--labeled stable $\theta$-curves.


Define a morphism of operads in categories $\mathcal{P} \to \mathcal{Q}$. This is a sequence of functors $\mathcal{P}(r) \to \mathcal{Q}(r)$, preserving the internal structures associated to the operads. Therefore, the stable curves, indexed by the graph $({\bf x}, \theta({\bf x}))$ inherit a monoidal structure. 

Let ${\bf C'}$ be the monoidal category of curves with $\theta$-symmetry (i.e. even pointed curves), indexed by the graph $({\bf x}, \theta({\bf x}))$; morphisms are bijections (see section\, \ref{s:sdm}). We show that there exists a topological operad on the monoidal category ${\bf C'}$. To construct this operad we use a monoidal functor between the classical monoidal category ${\bf C}$ of $S$--labeled stable curves and the monoidal category ${\bf C'}$ of curves with the $\theta$-symmetry.

For the sake of clarity let us recall a few notions and notations concerning monoidal functors and Lax monoidal functors.
\begin{definition}[Monoidal functor]\label{D:MonoF}
 A monoidal functor $\Phi=(F_{1},F_2,F_0):\Cb \to \Cb'$ between monoidal categories $\Cb$ and $\Cb'$ consists of the following items:
 \begin{itemize}
 \item An ordinary functor $F_1: \Cb \to \Cb'$ between categories;
 \item For objects $a,b$ in $\Cb$ morphisms: 
 \begin{equation} 
 F_2(a,b):F(a)\otimes F(b)\to F(a\otimes b)
\end{equation} in $\Cb'$ which are natural in $a$ and $b$
 \item For the units $e$ and $e'$, a morphism in $\Cb'$
 \begin{equation}
 F_0:e'\to Fe \end{equation}
 \end{itemize}
 Together these must make all the following three diagrams involving the structural maps $\alpha, \lambda$ and $\rho$ commute in $\Cb'$.

 \begin{center}
\begin{tikzpicture}[->,>=stealth',shorten >=1pt,auto,node distance=1.7cm,main node/.style={font=\sffamily\bfseries}]
\node[main node] (1) {$\scriptstyle F(a)\otimes (F(b)\otimes F(c))$};
 \node[main node] (2) [right of=1] {};
 \node[main node] (3) [right of=2] {};
 \node[main node] (4) [right of=3] {$\scriptstyle (F(a)\otimes F(b))\otimes F(c)$};
\node[main node] (5)[below of=1] {$\scriptstyle F(a)\otimes (F(b\otimes c))$};
 \node[main node] (6) [right of=5] {};
 \node[main node] (7) [right of=6] {};
 \node[main node] (8) [right of=7]{$\scriptstyle F(a \otimes b)\otimes F(c))$};
 \node[main node] (9) [below of=5] {$\scriptstyle F(a\otimes (b\otimes c))$};
  \node[main node] (10) [right of=9] {};
  \node[main node] (11) [right of=10] {};
 \node[main node] (12) [right of = 11] {$\scriptstyle F(a \otimes b)\otimes c)$};

 \path[every node/.style={font=\sffamily\small}]
   (1) edge node[above] {$\scriptstyle \alpha$} (4)
      edge node [left] {$\scriptstyle 1\otimes F_{2}$} (5) 
    (4)edge node[right] {$\scriptstyle F_{2}\otimes 1$} (8)
   		(5) edge node [left] {$\scriptstyle F_{2}$} (9)
   		  (8) edge node[right] {$ \scriptstyle F_{2}$} (12)
    (9)edge node [below] {$\scriptstyle F_{1}(\alpha)$} (12);
\end{tikzpicture}\label{E:12}
\end{center}

\begin{center}
\begin{tikzpicture}[->,>=stealth',shorten >=1pt,auto,node distance=2.5cm,main node/.style={font=\sffamily\bfseries}]
\node[main node] (1) {$\scriptstyle F(b)\otimes e'$};
 \node[main node] (2) [right of=1] {};
\node[main node] (3)[right of=2] {$\scriptstyle F(b)$};
 \node[main node] (4)[below of = 1]{$\scriptstyle F(b)\otimes F(e)$};
 \node[main node] (5) [right of=4] {};
 \node[main node] (6) [below of = 3] {$\scriptstyle F(b\otimes e)$};

 \path[every node/.style={font=\sffamily\small}]
   (1) edge node[above] {$\scriptstyle \rho$} (3)
    edge node [left] {$\scriptstyle 1\otimes F_{0}$} (4) 
   (4) edge node [below] {$\scriptstyle F_{2}$} (6)
    (6)edge node[right] {$\scriptstyle F(\rho)$} (3);   
\end{tikzpicture}\label{E:2}
\end{center}

\begin{center}
\begin{tikzpicture}[->,>=stealth',shorten >=1pt,auto,node distance=2.5cm,main node/.style={font=\sffamily\bfseries}]
\node[main node] (1) {$\scriptstyle e'\otimes F(b)$};
 \node[main node] (2) [right of=1] {};
\node[main node] (3)[right of=2] {$\scriptstyle F(b)$};
 \node[main node] (4)[below of = 1]{$\scriptstyle F(e)\otimes F(b)$};
 \node[main node] (5) [right of=4] {};
 \node[main node] (6) [below of = 3] {$\scriptstyle F(e\otimes b)$};

 \path[every node/.style={font=\sffamily\small}]
   (1) edge node[above] {$\scriptstyle \lambda$} (3)
    edge node [left] {$\scriptstyle F_{0}\otimes 1$} (4) 
   (4) edge node [below] {$\scriptstyle F_{2}$} (6)
    (6)edge node[right] {$\scriptstyle F(\lambda)$} (3);   
\end{tikzpicture}\label{E:3}
\end{center}
\end{definition}

\begin{proposition}\label{P:1}
Lax monoidal functors send monoids to monoids: 
if $F:(\Cb,\otimes)\to (\Cb',\otimes)$ is a lax monoidal functor and 
\begin{equation}
A\in \Cb, \mu_A:A\otimes A \to A, i_A:I\to A
\end{equation}
is a monoid object in $\Cb$, the object $F(A)$ is naturally equipped with the structure of a monoid in $\Cb'$ by setting 
\begin{equation}
i_{F(A)}:I_{\Cb'}\to F(I_{\Cb})\xrightarrow{F(i_{A})} F(A)
\end{equation}
and
\begin{equation}\label{E:Mon}
\mu_{F(A)}:F(A)\otimes F(A)\to F(A\otimes A)\xrightarrow{F(\mu_{A})} F(A). 
\end{equation}
This construction defines functor,
\[Mon(f):Mon(\Cb)\to Mon(\Cb').\]
\end{proposition}

\begin{lemma}
Let ${\bf C'}$ be the monoidal category of curves with $\theta$-symmetry defined above. 
Then, there exists an operadic structure on the monoidal category ${\bf C'}$.
\end{lemma}
\begin{proof}
Let $\Phi: {\bf C}\to {\bf C'}$ be a monoidal functor between the categories ${\bf C}$ and ${\bf C'}$ defined above. 

First consider the category ${\bf C}$. By applying definition \ref{D:Op}, we have a presheaf: $\Ps:Fin^{op}\to {\bf C}$ that defines an operad structure. 

Using the functor defined in equation~\ref{E:Mon}, Prop.~\ref{P:1}, we define $Mon(f):Mon({\bf C})\to Mon({\bf C'})$, mapping the monoid in ${\bf C}$ to the monoid in ${\bf C'}$. Using the relations in the diagrams of definition \ref{D:MonoF}
we have that 
 \[\Phi(\Ps):Fin^{op}\to {\bf C'}\] is a presheaf.
In fact, from proposition~\ref{P:1}, it follows that the composition operation is conserved in {\bf C'}, i.e. one has: 
\[\Phi(\Ps(Y\setminus X)\otimes \Ps(X))=\Phi(\Ps(Y\setminus X)) \otimes \Phi(\Ps(X)).\]
Therefore, the morphism $\Phi(\Ps(Y\setminus X)\otimes \Ps(X))\to \Phi(\Ps(Y))$ turns out to be:
\[ \Phi(\Ps(Y\setminus X)) \otimes \Phi(\Ps(X))\to \Phi(\Ps(Y)),\]
which defines what we wanted.
\end{proof}

Therefore, we may formulate the following definition of the topological $NY$ operad.


\begin{definition}[NY Operad]

Let {\bf C} and ${\bf C'}$ be monoidal categories of labeled curves defined above and $\Phi: {\bf C } \to {\bf C'}$ the monoidal functor. Let $\Ps$ be the presheaf giving the operad structure on $\{\M_{0,n+1}\}_{n\geq3}$. We define the $NY=\{NY(n+1)\}_{n\geq 3}$ to be the operad in the category ${\bf C'}$, given by the presheaf 
$\mathcal{P'}=\Phi\circ \Ps=: Fin^{op}\to {\bf C'}$ and endowed with the partial operadic composition $\circ_{X\subset Y}$:
\[\mathcal{P'}(Y/X)\otimes \mathcal{P'}(X)\to \mathcal{P}(Y),\] for any $X\subset Y$ and a unit $\eta: 1_{{\bf C}}\to \mathcal{P'}(\{*\}).$
\end{definition}

An object $NY(n+1)$ of ${\bf C'}$ can be described more geometrically. It is a sphere (i.e. $\PC$) with $2n-3$ marked points differing from $\{0,\frac{1}{2}, 1, \infty\}$. On this sphere, there exists one point (at $Fix(\theta)$) which is nodal. Therefore, we blow-up this point, and obtain two $\PC^{1}$'s transversally intersecting at a point; one of which has $2n-4$ marked points plus the three points: $\{0,1,\infty\}$, and the other one has 2 marked points. The operadic composition, done for $\M_{0,n+1}$, is symmetrically perpetuated on those marked points which are obtained by $\theta$, by the morphism argument mentioned above. 

\begin{remark}
The notation $\mathcal{M}_{0,2n+3}$, where $n\geq 1$, stands in this remark for the smooth stratum of the compactified space $\M_{0,2n+3}$. The smooth stratum equipped with $\theta$-symmetry
\[\{(z_1,\dots, z_n, \theta(z_1),\dots, \theta(z_n),0,1,\infty)\in \PC^{2n+3} \, | \, z_i\in \PC^{1} \setminus \{0,\frac{1}{2},1,\infty\}, z_i\neq z_j\}\] is a subspace of $\mathcal{M}_{0,2n+3}$.
It is constructed by adding the three points $(0,1,\infty)$ to the linear space as follows:

\[((\PC \setminus \{0,1,\infty\})^{n} \setminus \bigcup_{i=1}^{n} \{x_i=\frac{1}{2} \})\, \times\, ((\PC \setminus \{0,1,\infty\})^{n} \setminus \bigcup_{i=1}^{n} \{x_i=\frac{1}{2}\}). \]

Naturally, the composition operation for the NY operad is inherited from the one of $\{\M_{0,2n+3}\}_{n\geq 0}$:
\[\M_{0,2n+3}\circ_i \M_{0,2m+3}\to \M_{0,2(n+m+2)+3}.\]
\end{remark}
This remark is illustrated by the following investigation. 

$\ast$ The codimension 0 stratum of $NY$ is obtained by cutting out the fixed point under the transformation $\theta$, which forms a singularity. The smooth stratum is given by the set of $n$-tuples:

 \[\{(x_1,x_2,...,x_{2n},0,1,\infty)\in \PC^{2n+3}\, |\, x_i \in \PC \setminus \{0,\frac{1}{2},1,\infty\}, x_i\neq x_j \}.\] 
 The codimension 0 stratum of $NY(n)$ is contained in $\mathcal{M}_{0,2n+3}$, defined as:
 \[\mathcal{M}_{0,2n+3}=\{(x_1,x_2,...,x_{2n}, 0,1,\infty)\in \PC^{2n+3} \ | \ x_i \in \PC \setminus \{0,1,\infty \}, x_i\neq x_j \}.\]

$\ast$ The codimension 1 stratum: \[(\mathcal{M}_{0,2n+3}\setminus \, \bigcup_{i=1}^{n}\{x_i=\theta(x_i)=\frac{1}{2}\}) \times (\mathcal{M}_{0,2m+3}\setminus \, \bigcup_{i=1}^{m}\{x_i=\theta(x_i)=\frac{1}{2}\}), \] is isomorphic to a subspace of the codimension 1 stratum $\mathcal{M}_{0,2n+3} \times \mathcal{M}_{0,2m+3}$.

\vskip.1cm 
The composition map is illustrated in the following way.
\begin{example} In the simplest case i.e. when $n=1$, we have the moduli space given by: \[\{(x_1,\theta(x_1), 0,1,\infty)\in \PC^{5} \ | \ x_i\in \PC^{1}- \{0,\frac{1}{2},1,\infty)\},\] where the transformation $\theta$, implies $\theta(x_1)=1-x_1\in \PC^{1} \setminus \{0,\frac{1}{2},1,\infty\}.$ This is a sub-moduli space of $\mathcal{M}_{0,5}$. We now endow this space with a composition map $``\circ"$ to form an algebra structure (and thus the NY operad).

\vskip.1cm 

 Let us compose this space, for instance, with the moduli space: \[\{(y_1,y_2, \theta(y_1),\theta(y_2),0,1,\infty)\in \PC^{7} \ |\ y_i\in \PC^{1} \setminus \{0,\frac{1}{2},1,\infty\} , y_1\neq y_2\}.\]  
The composition is given by using the subspace of the codimension 1 stratum $\mathcal{M}_{0,5}\times \mathcal{M}_{0,7}$ and results in the following space: 
\[\{(z_1,z_2,z_3, z_4, \theta(z_1),\theta(z_2),\theta(z_3),\theta(z_4),0,1,\infty)\in \PC^{11} \ |\ \ z_i\in \PC^{1} \setminus \{0,\frac{1}{2},1,\infty\} , z_i\neq z_j\}.\]
\end{example}  
The generalisation to other subspaces or higher arities can be done following the same recipe in terms of codimension 1 strata.
 

\section{The DM--stack with hidden symmetry}\label{S:NY}

It has been previously shown that the monoidal category of DM--stacks, enriched by the $\theta$-symmetry, gives a collection of objects, forming an operad denoted $NY=\{NY(n+1)\}_{n\geq1}$. We use this $NY$ operad to construct a new type of gravity operad, in the flavour of~\cite{Get},~\cite{Kap}.

\subsection{Stratification of $\M_{0,n}$ with $\theta$-symmetry}\,

We show properties of the stratification of $\M_{0,n}$ with symmetry. Consider an object $(C/T,(x_i)_{i\in S})$, where $C$ belongs to the category $\mathcal{F}$ discussed in section~\ref{s:F}, and $T$ belongs to the category $\mathcal{S}$. The scheme $T\in \mathcal{S}$ is decomposed into a disjoint union of strata, where each stratum $D(\tau)$ is indexed by a stable $S$-tree $\tau$. The stratum $D(\tau)$ is a locally closed, reduced and irreducible subscheme of $T$, and the parametrizing curves are of the combinatorial type $\tau$. The codimension of the stratum $D(\tau)$ equals to the cardinality of the set of edges of $\tau$ (i.e. the number of singular points of a curve of type $\tau$). Note that this subscheme depends only on the $n$-isomorphism classes of $\tau$. 

The graph, $\tau$, is the dual graph. It is obtained by blowing up the colliding points in the fat diagonal $\Delta$ (where $\Delta$ is given by $\{x_i=x_j | i\neq j\}$). The fat diagonal forms an $\mathcal{A}_{n-1}$ singularity. By Hironaka~\cite{Hi}, one can blow-up the singular locus in such a way that it gives a divisor with normal crossings. This divisor is usually called $D$ and lies in the total space. To avoid any type of confusion, we denote by $D^{\tau}$ a divisor with normal crossings, lying in the total space $C$ and with combinatorial data encoded by the graph $\tau$.

The divisor $D^\tau$ corresponds to the given stratum $D(\tau)$ of $\M_{0,n}$ (the base space). It is decomposed into a sum of irreducible components which are closed integral subschemes of codimension 1, on the blown-up algebraic variety. We have\[ D^{\tau}=\sum_{n\in I} n_iD_{\tau, i},\] where the $n_{i}$ are integers.
Each of the irreducible components are isomorphic to $\PC^1$. The normal crossing condition implies that each irreducible component is non singular and, whenever $r$ irreducible components $D_{\tau,1},\dots, D_{\tau,r}$ meet at a point $P$, the local equations $f_{1},\dots,f_{r}$ of the $D_{\tau, i}$ form part of a regular system of parameters at $P$. It is also possible to define locally the divisor by $\{(U_i, f_i)\}$, where $f_i$ are holomorphic functions and $U_i$ are open subsets.

The closure of a stratum $\overline{D}(\tau)$ in $T$ is formed from the union of subschemes $D(\sigma)$, where $\tau>\sigma$, such that $\tau$ and $\sigma$ have the same set of tails. The tails correspond to the set of marked points on the irreducible connected component. In the geometric realization of the graph, the tails---also called {\it flags}---correspond to a set of numbered half-edges, which are incident to only one vertex. This set is denoted by $F_{\tau}(v)$, where $v$ is a given vertex in the graph $\tau$, to which the half-edges are incident. In our case, where the genus of the curve is zero, the condition $\tau>\sigma$ is uniquely specified by the {\it splitting} data, which can be described as a certain type of Whitehead move. 

Let us expose roughly the splitting procedure. Choose a vertex $v$ of $\tau$ and a partition of the set of flags, incident to $v$: $F_{\tau}(v)=F'_{\tau}(v)\cup F''_{\tau}(v)$ such that both subsets are invariant under the involution $j_{\tau}: F_{\tau}\to F_{\tau}$. To obtain $\sigma$, replace the vertex $v$ by two vertices $v'$ and $v''$ connected by an edge $e$, where the flags verify $F'_{\tau}(v')=F'_{\tau}(v)\cup \{e'\}$, $F'_{\tau}(v'')=F''_{\tau}(v)\cup \{e''\}$, where $e',e''$ are the two halves of the edge $e$. The remaining vertices, flags and incidence relations stay the same for $\tau$ and $\sigma$. For more details, see~\cite{Ma99} Chap. III $\S$ 2.7, p.90.

Consider the stratification of the scheme $T$ by graphs (trees, in fact), such as depicted in~\cite{Ma99} in Chap. III $\S 3$. Let $\tau$ be a tree. If $S$ is a finite set, then $\mathcal{T}((S))$ is the set of isomorphism classes of trees $\tau$, whose external edges are labeled by the elements of $S$. The set of trees is graded by the number of edges:
\[\mathcal{T}((S))=\bigcup_{i=0}^{|S|-3}\mathcal{T}_{i}((S)), \]
where $\mathcal{T}_{i}((S))$ is a tree with $i$ edges. The tree $\mathcal{T}_{0}((S))$ is the tree with one vertex and the set of flags equals to $S$. 

Not only do those trees allow a stratification of $\M_{0,n}$, but due to~\cite{{Kee},{EHKR},{Sing}}, one may derive relations for computing the cohomology ring $H^*(\M_{0,n}(\C))$. In particular, $\M_{0,n}$ has no odd homology and its Chow groups are finitely generated and free abelian.

We recall briefly the Keel presentation~\cite{Kee}. Let $D_S$ be a component of a divisor lying in $\M_{0,n}$, where $S\subset \{1,2,...,n\}$. In Keel's presentation (see~\cite{Kee} section 1), the $n$-th Chow ring turns out to be isomorphic to the quotient of the polynomials ring, generated by degree 2 elements (the $D_S$), modulo an ideal in which the $D_S$ are subject to the following relations:

\begin{itemize}
\item $D_S=D_{\{0,1,...,n\}\setminus S}$.
\item For distinct elements $i,j,k,l\in\{0,1,...,n\}$ \[\sum_{i,j\in S; k,l\notin S }D_S=\sum_{i,k\in S; j,l\notin S } D_S.\]
\item If $S\cap T \notin \{0,S,T\}$ and $S\cup T \neq \{0,1,...,n\}$ then $D_SD_T=0$.
\end{itemize}
In~\cite{Do}, it is shown that $H^{*}(\M_{0,n})$ is Koszul, which is useful to determine various homotopy invariants for the DM--compactification.
\smallskip

We proceed to a brief discussion concerning the stratification of the $S$-stable curves of genus 0 with $\theta$-symmetry $\M_{0,S}(\theta)$, i.e. indexed by the graph $(x,\theta(x))$ and compare it to the strata in the classical $\M_{0,S}$ case. A list of the first strata occurring in the base space for $\M_{0,S}(\theta)$ is given. The word ``codim'' stands for codimension:
\begin{center}
\begin{table}[h]
\begin{tabular}{clll}
 Codim $n$ & & & \\
$n=0$ & $(x_1,..,x_n)$:&$ x_i \neq x_j$ for $i\neq j$, &$x_i \in \PC^1\setminus \{0,1,\frac{1}{2},\infty \}$.\\
$n=1$ & $(x_1,..,x_n)$:& $x_i \neq x_j$ for $i\neq j$ , & $x_i \in \PC^{1}\setminus \{0,1,\infty \}$.\\
$n=1$ & $(x_1,..,x_n)$:&$ x_{i_1}=x_{i_2}$,& $x_i \in \PC^{1}\setminus \{0,1,\frac{1}{2},\infty\}$,\\
&&$ x_{i_k} \neq x_{i_j}$  for $k,j \not\in \{1,2\}$,&$ x_i \in \PC^{1}\setminus \{0,1,\frac{1}{2},\infty\}$.\\
$n=2$ & $(x_1,..,x_n)$ :& $x_{i_1}=x_{i_2}=x_{i_3}$, & $x_i \in \PC^{1}\setminus \{0,1,\infty \}$.\\
$n=2$ & $(x_1,..,x_n)$:&$x_{i_2}=x_{i_3}$, & $x_i \in \PC^{1}\setminus \{0,1,\infty \}$,\\
&&& $x_{i_1}{\small\in}\PC^{1}\setminus \{0,1,\frac{1}{2},\infty \}$.\\
$n=3$ & $(x_1,..,x_n) $:&$ x_{i_1}=x_{i_2}=x_{i_3}=x_{i_4}$, & $x_i \in \PC^{1}\setminus \{0,1,\infty \}$. \\
$n=3$ & $(x_1,..,x_n)$:&$ x_{i_2}=x_{i_3}=x_{i_4}$, & $x_i \in \PC^{1}\setminus \{0,1,\infty \}$,\\
&&&$x_{i_1}{\small\in}\PC^{1}\setminus \{0,1,\frac{1}{2},\infty \}$.\\
$n=4$ & $(x_1,..,x_n) $:&$ x_{i_1}=x_{i_2}$, &$x_i \in \PC^{1}\setminus \{0,1,\frac{1}{2},\infty \}$.\\
$n=4$ & $(x_1,..,x_n)$:&$ x_{i_1}=...=x_{i_5}$, & $x_i \in \PC^{1}\setminus \{0,1,\infty \}$. \\
\end{tabular}
\end{table}
\end{center}

This discussion, allows further considerations concerning the fibers of the proper flat map $\pi:C\to T$, where $C$ is the $|S|$-stable curve. To a given divisor $D^{\tau}=\sum_{n\in I} n_iD^{\tau}_{i}$, locally defined by a chart $\{(U_i, f_i)\}$, the symmetry $\theta$ maps $D^{\tau}$ to the corresponding divisor: $D^{\theta(\tau)}=\sum_{n\in I} n_iD^{\theta(\theta)}_{i}$ defined by $\{(U_i, \theta(f_i))\}$. \
So, this defines a pair of divisors $(D^\tau, D^{\theta(\tau)})$, being isomorphic. The intersection $(D_\tau,D^{\theta}_\tau)$ is non-empty if and only if $f_i=\theta(f_i)$. 

\vskip.1cm

\begin{proposition}
Any pair of divisors $D^\tau$ and $D^{\theta(\tau)}$ indexing a given stratum in $\M_{0,S}$ and $\M_{0,S}(\theta)$ are a point reflection of each other and their union in $\M_{0,S} \times \M_{0,S}(\theta)$ 
forms a connected set if and only if there exists a holomorphic function such that $f_i=\theta(f_i)$. 
\end{proposition}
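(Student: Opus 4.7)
The plan is to treat the two assertions of the proposition separately and then combine them. First I would verify that the pair $(D^\tau, D^{\rho(\tau)})$ is related by a point reflection. Since $\rho$ acts on each marked coordinate by $x \mapsto 1-x = 2\cdot\tfrac{1}{2} - x$, which is the affine reflection through the fixed point $\tfrac{1}{2}$, and since the stratification of the base is functorial in the data $(C/T, (x_i))$, the induced action of $\rho$ on the indexing tree $\tau$ and on the divisor $D^\tau$ is obtained by applying the same affine reflection coordinatewise to the local defining equations. In particular the involution sends the chart $\{(U_i, f_i)\}$ cutting out $D^\tau$ to the chart $\{(U_i, \rho(f_i))\}$ cutting out $D^{\rho(\tau)}$, so the two divisors are intertwined by a point reflection centred at the fixed locus of $\rho$.

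Next I would establish the connectedness equivalence. Each $D^\tau$ is a closed reduced irreducible subscheme of the blown-up total space $C$ obtained from the Hironaka resolution recalled above, hence connected; the same holds for $D^{\rho(\tau)}$. The union of two connected sets is connected if and only if they meet, so the connectedness of $D^\tau \cup D^{\rho(\tau)}$ inside $\F \times \F^{\rho}$ is equivalent to the non-emptiness of $D^\tau \cap D^{\rho(\tau)}$. Using the local charts, a point of $U_i$ lies in both divisors precisely when $f_i = 0$ and $\rho(f_i) = 0$ hold simultaneously, that is, when $f_i$ and $\rho(f_i)$ share a common zero inside $U_i$.

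To close the ``iff'' I would argue in both directions. If some holomorphic $f_i$ satisfies $f_i = \rho(f_i)$, the two local charts coincide and every zero of $f_i$ lies automatically in the intersection, so the union is connected. Conversely, if $D^\tau \cap D^{\rho(\tau)}$ is non-empty, then in a neighbourhood of an intersection point the functions $f_i$ and $\rho(f_i)$ vanish along a common reduced codimension-one locus; both being holomorphic equations for the same divisor along this locus, they must coincide up to an invertible holomorphic factor, and after normalising representatives one obtains the required identity $f_i = \rho(f_i)$.

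The step I expect to be the main obstacle is this last one: upgrading a non-empty set-theoretic intersection to the scheme-theoretic identity of local equations. One has to rule out the degenerate possibility that the two divisors meet only transversely at an isolated point of a normal-crossings stratum, in which case $f_i(p) = \rho(f_i)(p) = 0$ at a single $p$ would not force the defining functions to agree as germs. Controlling this requires exploiting the symmetry of the stratification under $\rho$ so that an intersection point lies on a common irreducible branch; this is exactly the content that the proposition encodes by demanding the existence of a $\rho$-invariant holomorphic function rather than merely a shared zero.
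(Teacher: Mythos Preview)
Your strategy matches the paper's almost exactly: first the point-reflection claim via the local charts $\{(U_i,f_i)\}\mapsto\{(\rho(U_i),\rho(f_i))\}$, then the reduction of connectedness of the union to non-emptiness of the intersection, then the two directions of the ``iff''. The paper's argument is shorter and less careful; in particular it ties the failure of the condition directly to the sections avoiding $\tfrac12$ (i.e.\ lying in $\mathbb{P}^1\setminus\{0,1,\tfrac12,\infty\}$), which is the geometric content you are reaching for.

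One point worth noting: the obstacle you flag in your last paragraph---upgrading a bare intersection point to a genuine identity $f_i=\rho(f_i)$ of local equations---is not something the paper resolves. The paper reads the condition ``$f_i=\rho(f_i)$'' loosely, effectively as the existence of a point where it holds (equivalently, a section hitting the fixed point $\tfrac12$), and with that reading both implications collapse to ``the divisors meet iff they meet''. Indeed, if you look closely, the paper's two displayed sentences (``if disjoint then no points verifying $f_i=\rho(f_i)$'' and ``reciprocally, if there exists a chart with $f_i=\rho(f_i)$ then connected'') are logically the same implication stated twice, once contrapositively. So your more scheme-theoretic reading of the condition is stricter than what the paper intends, and the difficulty you anticipate is an artifact of that stricter reading rather than a gap in the underlying claim. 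If you adopt the paper's pointwise interpretation, your argument goes through without the final normalisation step.
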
 
\begin{proof}
One direction is easy: if $D^{\theta(\tau)}$ and $D^{\tau}$ are disjoint, then there are no points verifying $f_i=\theta(f_i)$. This occurs in the case where the sections are different from $\frac{1}{2}$, so lying in $\PC^{1}\setminus\{0,1,\frac{1}{2},\infty\}$. Reciprocally, if there exists a chart for which $f_i=\theta(f_i)$, then the set $D^\tau \cup D^{\theta(\tau)}$ is connected. It is clear that $D^{\theta(\tau)}$ is a point reflection of $D^\tau$: each component of the divisor $D^\tau,$ being given by the charts an $\{(U_i, f_i)\}$, where $U_i$ is in $\C$ and $f_i$ is a holomorphic function, therefore the charts for $D^{\theta(\tau)}$ given by $\{\theta(U_i), \theta(f_i)\}$ are given by the point the point reflection $\theta$, so concerning the geometry of the divisor $D^{\theta(\tau)}$ it is a point reflection of $D^{\tau}$. 
\end{proof}

\begin{proposition}
The graphs, indexing the strata of $\M_{0,n}\times \textrm{Aut}(\M_{0,n})$, are invariant under a symmetry group of order 2. 
\end{proposition}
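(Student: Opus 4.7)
The plan is to deduce the order-two symmetry of the indexing graphs directly from the involutive identity $\rho^2=\Id_{\F}$ established in Section~\ref{s:sdm}, and to transport this involution through the stratification formalism to the product $\M_{0,n}\times\textrm{Aut}(\M_{0,n})$.

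First, I would record that every stratum $D(\tau)$ of $\M_{0,n}$ is labeled by a stable $S$--tree $\tau\in \mathcal{T}((S))$ encoding the combinatorial type of the underlying curve. By the preceding proposition, $\rho\in\textrm{Aut}(\M_{0,n})$ sends the divisor $D^{\tau}$ locally defined by $\{(U_i,f_i)\}$ to the divisor $D^{\rho(\tau)}$ defined by $\{(U_i,\rho(f_i))\}$, so it induces a well-defined map $\tau\mapsto \rho(\tau)$ on the set of indexing trees. Because $\rho^2=\Id$ holds already at the level of $\textrm{Aut}(C/T,(x_i))$, this induced map is an involution on $\mathcal{T}((S))$, giving a $\Z/2\Z$-action on strata of the single factor $\M_{0,n}$.

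Second, I would specify the indexing data for the strata of the product. A stratum of the first factor is encoded by a tree $\tau$; pairing with the nontrivial element $\rho$ of the order-two subgroup of $\textrm{Aut}(\M_{0,n})$ produced in Section~\ref{s:sdm} yields a combinatorial datum $(\tau,\rho(\tau))$ corresponding to the pair of divisors $(D^\tau, D^{\rho(\tau)})$ studied in the preceding proposition. The group $G=\langle \rho\mid\rho^2=\Id\rangle$ then acts on such pairs by the swap $\sigma:(\tau,\rho(\tau))\mapsto(\rho(\tau),\tau)$, and $\sigma^2=\Id$ since $\rho^2(\tau)=\tau$. One would further verify that $\sigma$ preserves the attached combinatorial structure — vertices, flags, labeling in $S$, and the splitting relation $\tau>\sigma$ recalled after the description of $\mathcal{T}((S))$ — so that the set of indexing graphs is invariant, as a structured set, under $G$.

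The main obstacle is the interpretive step: the notation $\M_{0,n}\times\textrm{Aut}(\M_{0,n})$ obliges one to fix precisely what a stratum and its indexing graph mean in the product. Once the translation between the geometric point-reflection on divisors (supplied by the previous proposition) and the combinatorial swap on trees is pinned down, the order-two property follows immediately from $\rho^2=\Id$; the genuine content lies in checking that this combinatorial swap extends uniformly across all codimensions of strata in the product and is compatible with the stratification induced by the splitting/Whitehead moves.
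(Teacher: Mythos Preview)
Your argument is correct in spirit and rests on the same engine as the paper's---the involutive identity $\rho^2=\Id$---but the route and the final interpretation differ.

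The paper argues geometrically rather than combinatorially: it observes that the hyperplane arrangements (fat diagonals) in the two factors are carried into one another by the point reflection $\rho$, so that the divisors in $\M_{0,n}$ and in $\textrm{Aut}(\M_{0,n})$ have local equations $f_1\cdots f_i=0$ and $\rho(f_1)\cdots\rho(f_i)=0$ respectively; these divisors are therefore isomorphic and, crucially, \emph{glued at a point} (the fixed locus of $\rho$). From this the paper reads off that each dual graph of the combined divisor is a single connected object built from two isomorphic halves joined at a vertex, hence carries an \emph{internal} automorphism of order two. Your formulation instead packages the indexing datum as an ordered pair $(\tau,\rho(\tau))$ and exhibits the swap as an \emph{external} $\Z/2\Z$-action on such pairs. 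Both readings justify the proposition, but the paper's ``glued at a point'' step is precisely what converts your external swap into an honest graph automorphism; you flag the interpretive ambiguity yourself, and this is where the paper resolves it. If you want to match the paper's conclusion exactly, add the observation (already available from the preceding proposition) that $D^\tau$ and $D^{\rho(\tau)}$ meet along $f_i=\rho(f_i)$, so that the indexing graph of a stratum in the product is $\tau$ and $\rho(\tau)$ identified at the vertex coming from $Fix_\rho$, and your swap becomes the reflection of that graph.
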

\begin{proof}
The hyperplane arrangements giving the set of colliding points, for $\M_{0,n}$, resp. $ \textrm{Aut}(\M_{0,n})$, are symmetric to each other by the point reflection $\theta$.
Therefore, since a divisor is defined in exactly the same way, i.e. in local coordinates we have $f_1...f_i=0$, (resp. $\theta(f_1)...\theta(f_i)=0)$, the divisors for $\M_{0,n}$ and $ \textrm{Aut}(\M_{0,n})$ are isomorphic to each other, and glued at a point. This implies that the dual graphs of $\M_{0,n}\times \textrm{Aut}(\M_{0,n})$, are invariant under a symmetry group of order 2.
\end{proof}

\subsection{The stacky $\M_{0,S}(\theta)$}

\begin{theorem}
The category fibered in groupoid $\M_{0,S}(\theta)$ over $Sch_{\Q}$ of $S$-pointed stable curves with $\theta$-symmetry is a stack.
\end{theorem}
\begin{proof}
In order to show that $\M_{0,S}(\theta)$ is a stack, let us first equip the base space $\cS$ with the \'etale topology $\mathcal{T}$. We need to verify the three conditions of definition~\ref{DM}.
\begin{enumerate}
\item The first condition is to show that the contravariant functor from $\cS^{op}$ to the category of sets $Set$ is a sheaf. 

We know that if $\cS$ has a Grothendieck topology, then $\M_{0,n}$ is a stack. The modification of this stack into $\M_{0,(x,\theta(x))}$ implies a slight modification of the data. Indeed, $\F: \cS^{op}\to Set$ is a sheaf. Properties of the category $Set$ allow to consider the direct sum $Set\oplus Set$.
So, we are dealing with the section $ \cS^{op}\to Set\oplus Set$, which turns out to be a direct sum of sheafs: $\F + \F: \cS^{op}\to Set\oplus Set$, hence a sheaf. So, we have a sheaf $\cS^{op}\to Set$.  

\item The second condition to check is that for any open $T'$ in $T$, the functor $T'\mapsto Iso_{T'}(X_1\oplus X_1^{\theta},X_2\oplus X_2^{\theta})$ is a sheaf.

By hypothesis, we know that for any open $T'$, the functor $T'\mapsto Iso_{T'}(X_1,X_2)$ is a sheaf. Clearly, the functor $T'\mapsto Iso_{T'}(X_1^{\theta},X_2^{\theta})$ is also a sheaf. 
So, the map from $T'\mapsto Iso_{T'}(X_1,X_2)\oplus Iso_{T'}(X_1^{\theta},X_2^{\theta})$ is a sheaf and by elementary properties of $\oplus$, we have $Iso_{T'}(X_1,X_2)\oplus Iso_{T'}(X_1^{\theta},X_2^{\theta})=Iso_{T'}(X_1\oplus X_1^{\theta},X_2\oplus X_2^{\theta})$. 
 
\item The last property is the so-called cocycle condition. Let $\{T_i \xrightarrow{\phi_i} T\}_{i}$ be an \'etale cover of $T$, where $\phi_i$ are \'etale maps and let $F$ be a family over $T$. Then, applying to $F$ the base change functors $\phi^*$, we get localized families $F_i$ over $T_i$, and similarly localized families $F_{ij}$ over $T_{ij}:=T_i\times_{T}T_j$, $F_{ijk}$ over $T_{ijk}$, (etc).

 They come along with the descent data, i.e isomorphisms $f_{ij}:pr^*_{ji,i}F_i\xrightarrow{\cong} pr^*_{ji,j}F_j$, which turn to satisfy the cocycle condition: $f_{ki}=f_{kj}\circ f_{ji}$ on $T_{kji}$. The family $F$ is compatible with the direct sum operation. 
 
Therefore, we have over $T$ the family: $F\oplus F^{\theta}=(C/T,(x_i))\oplus (C/T,(\theta(x_i)))$. The base change functors $\phi^*$ give localized families $F_i\oplus F^{\theta}_{i}$ over $T_i$ (more generally, $F_{ij}\oplus F^{\theta}_{ij}$ over $T_{ij}:=T_i\times_{T}T_j$, $F_{ijk}\oplus F^{\theta}_{ijk}$ over $T_{ijk}$, etc).

We have $(f_{ij},f^{\theta}_{ij}):(pr^*_{ji,i}F_i,pr^*_{ji,i}F^{\theta}_i) $$\xrightarrow{(\cong,\cong)}(pr^*_{ji,j}F_j,pr^*_{ji,j}F^{\theta}_{}j)$, satisfying 
the cocycle condition: $(f_{ki},f^{\theta}_{ki})=(f_{kj}\circ f_{ji}, f^{\theta}_{kj}\circ f^{\theta}_{ji})$.
The converse property comes from 2). 
 \end{enumerate}
 \end{proof}
The construction of $\M_{0,S}(\theta)$ being very close and in some sense inherited from $\M_{0,S}$, we can state the following. 
 \begin{corollary}
The category fibered in groupoid $\M_{0,S}(\theta)$ over $Sch_{\Q}$ of $S$-pointed stable curves with $\theta$-symmetry is an algebraic stack of Deligne--Mumford type.
 \end{corollary}
 
\subsection{The NY Gravity operad}\,
In this section, we introduce the NY gravity operad $Grav_{NY}$.
The terminology of {\it gravity} was chosen here because of the geometric structure of the space $\M_{0,n}$ with $\theta$-symmetry. In particular, this is equivalent to introducing a split quaternionic structure, given by $\theta$. Such a structure can interpreted as the realisation of a module over a ring extending the complex numbers: the ring of split quaternion numbers. This type of model serves in gravity and general relativity, as one can see for instance in~\cite{KuKoGev} and~\cite{Ul}.


Actually, whenever we want to take into account action of symmetries
upon complexes (de Rham, Hodge etc) and respective mixed structures,
we are bound to pass to one of the several derived environments.
For example, one can use a quite general ``dendroidal'' formalism of \cite{CiMoe},
perhaps extended by various versions of graph categories developed
in \cite{BoMa}, or even Feynman categories.

\smallskip

For the NY gravity operad, we take inspiration in a similar construction as in~\cite{Get}. In the latter case, the gravity operad $Grav=\{Grav(n)\}_{n\geq 0}$ with $n$-arity $Grav(n)=sH_{*}(\mathcal{M}_{0,n})$ where $s$ denotes the suspension, has a mixed Hodge structure.

$Grav$ lies in the symmetric monoidal category of mixed Hodge complexes. The monoidal structure is given by the graded tensor product. Every object $Grav(n)=sH_{*}(\mathcal{M}_{0,n})$ in this category, carries a unique mixed Hodge structure, compatible with the Poincar\'e residue map.

The reason mixed Hodge structures enter the game follows from Proposition 8.2.2 of Deligne in~\cite{De}. This proposition states that for any integer $k$, the $k^{th}$ cohomology group of a given complex algebraic variety is endowed with a functorial mixed Hodge structure. We will investigate similar structures for our NY gravity operad.


By $Fix_{\theta}$ we denote the set of fixed points under the involution $\theta$. The locus, in this context is a point. Since this gives a singular point (or an irreducible curve $\mathbb{P}^1$, if we have proceeded to a blow-up), we remove it, in order to define the NY Gravity operad.
\begin{definition}
Let $Grav_{NY}(n)$ be the stable cyclic $\Ss$-module defined as follows:

\[Grav_{NY}(n)= 
\begin{cases}
sH_*(\mathcal{M}_{0,n}(\theta)\setminus Fix_{\theta}), \quad n\geq 3\\
0, \quad \quad\quad\quad\quad\quad\quad\quad\quad\quad\quad\quad\quad\quad\quad n<3
\end{cases}
\]
\end{definition}
Due to the split quaternionic structure of $\theta$, we have the following implication.
 
\begin{lemma}[Splitting lemma]\label{L:split}
 moduli space with $\theta$-symmetry $\mathcal{M}_{0,n}(\theta)$ splits into a pair of isomorphic and symmetric subspaces $\mathcal{M}_{0,n}(\theta)\cong \mathcal{M}_{0,n}\times \mathrm{Aut}(\mathcal{M}_{0,n})$, where $\mathrm{Aut}(\mathcal{M}_{0,n})$ is the image of $\mathcal{M}_{0,n}$ under $\theta$.
\end{lemma}

We briefly discuss the logarithmic forms along a normal crossing divisor and recall notations from previous sections, namely about the coordinates of the normal crossing divisor. We define a divisor $D$ as $\{f_1\dots f_r=0\}$ in ${\M}_{0,n}$, for a fixed positive integer $r$. A meromorphic differential form on ${\M}_{0,n}$ has a logarithmic form on the divisor if it can be written as a linear combination of forms of the type: 
 \[\frac{\partial f_{i_1}}{f_{i_1}} \wedge \dots \wedge \frac{\partial{ f_{i_r}}}{{f_{i_r}}}\wedge \eta,\]
with $i_1\leq ...\leq i_r$ and where $\eta$ is a holomorphic form on ${\M}_{0,n}$. On each closure of a stratum $\overline{D(\sigma)}$, define a complex of sheaves of logarithmic forms : 
$\Omega_{\overline{D(\sigma)}}^{\bullet}(log {\overline{D(\sigma)}})$. If $j_{\sigma}$:$ D(\sigma)\to \overline{D(\sigma)}$ denotes the natural open immersion, we have a quasi-isomorphism: 
\[ (j_{\sigma})_{\star}(\C_{D(\sigma)})\cong\Omega_{\overline{D(\sigma)}}^{\bullet}(log {\overline{D(\sigma)}}). \]
This induces an isomorphism of cohomology groups. 

\begin{theorem}
 The stable cyclic $\Ss$-module $\{Grav_{NY}(n)\}_{n\geq 3}$ admits a natural structure of cyclic operad and is denoted by $Grav_{NY}$.
\end{theorem}
\begin{proof}
It is known that the collection of stable curves with labeled points forms a topological operad. As shown in equation~\eqref{E:1},
we have a well defined operation composition for this:\[\M_{0,l}\times\M_{0,m_1}\times\dots\times\M_{0,m_l}\to \M_{0,m_1+\dots +m_l}.\] 
By $\theta$-symmetry, the same holds for the collection $\{\mathrm{Aut}(\M_{0,n})\}_{n\geq 3}$.
Combining both morphisms together, we get the following:
\[\begin{aligned}
\M_{0,l}\times\mathrm{Aut}(\M_{0,l}) \times \M_{0,m_1}\times \mathrm{Aut}(\M_{0,m_1})\times & \dots\times \M_{0,m_l}\times \mathrm{Aut}(\M_{0,m_l})\\
&\to \M_{0,m_1+\dots +m_l} \times \mathrm{Aut}(\M_{0,m_1+\dots +m_l}).
\end{aligned}\]
 
To construct the NY gravity operad, we restrict our considerations to the smooth stratum and cut out the fixed point $Fix_\theta$.
 
To define the composition map for $Grav_{NY}$,we proceed by using the Poincar\'e residue morphism~\cite{De}. We need the following embedding : 
\[\begin{aligned}
(\mathcal{M}_{0,l}\times\mathrm{Aut}(\mathcal{M}_{0,l}))\setminus Fix_{\theta} \times (\mathcal{M}_{0,m}\times& \mathrm{Aut}(\mathcal{M}_{0,m}))\setminus Fix_{\theta} \\
& \to \M_{0,m+l}\times \mathrm{Aut}(\M_{0,m+l}),
\end{aligned}\] 

The Poincar\'e residue associated to that embedding is given by: 
\[
Res: H^{*}(\mathcal{M}_{0,m+l}\times \mathrm{Aut}(\mathcal{M}_{0,m+l}))\to H^{*}(\mathcal{M}_{0,m}\times \mathrm{Aut}(\mathcal{M}_{0,m}) \times\mathcal{M}_{0,l}\times \mathrm{Aut}(\mathcal{M}_{0,l})).\]

The composition operation is now discussed. With the residue morphism, defined in the paragraph above, for $\tau>\sigma$ we have the residue formula:
\begin{equation}\label{E:3}
Res_{\tau}^{\sigma}: H^{\bullet}(D(\sigma)\sqcup D(\sigma^{\theta}))\to H^{\bullet-1}(D(\tau)\sqcup D(\tau^{\theta}))(-1), \end{equation}
where we have added a right Tate twist $(-1)$ and $D(\sigma^{\theta})$ (resp. $D(\tau^{\theta})$) is the stratum in $\mathrm{Aut}(\M_{0,m+l})$. 

Let $\sigma\sqcup \sigma^{\theta}\in\mathcal{T}((n))\oplus \mathcal{T}((n))$ in the moduli space $\M_{0,m+l}\times \mathrm{Aut}(\M_{0,m+l})\setminus Fix_\theta$. We choose $\sigma$ (resp. $\sigma^{\theta}$) to have one internal edge $e$ (resp. $e_{\theta}$) joining two internal vertices $v'$ and $v''$. The stratum in $\M_{0,m+l}$, $D(\sigma)$, is decomposed as follows:
\[D(\sigma)\cong \mathcal{M}_{0,F(v')}\times \mathcal{M}_{0,F(v'')},\]
where in the flag $F(v)$ (resp. $F(v'')$) there are $m$ (resp. $l$) incident edges to $v'$ (resp. $v''$). An analogical decomposition is done for $D(\sigma^{\theta})$, where the pair of vertices incident $e_{\theta}$ are denoted $v'_{\theta}$ and $v''_{\theta}$. 

Given $\sigma$ with one edge, we get the residue morphism: 
\[H^{a+b-1}(\mathcal{M}_{0,m+l}\times \mathrm{Aut}(\M_{0,m+l})\setminus Fix_\theta)(-1)\to \]\[H^{a-1}(\mathcal{M}_{0,F(v')}\times \mathcal{M}_{0,F(v'_{\theta})})(-1)\otimes 
 H^{b-1}(\mathcal{M}_{0,F(v'')}\times \mathcal{M}_{0,F(v''_{\theta})})(-1),\]
which is obtained from equation\, \ref{E:3} by using the K\"unneth formula, a Tate twist $(-1)$ and multiplying by the Koszul sign $(-1)^{a-1}$. Finally, using the Poincar\'e duality, leads to the definition of a composition product $\circ_{i}$ for $Grav_{NY}$.

It thus remains to verify if the $Grav_{NY}$ satisfies the equivariance and associativity axioms. 
The equivariance axiom is, indeed, not modified. The main argument is that $t_i\in \Ss_{i}$ acts on the set of marked points $\{1,...,i\}$ and, thus on the set obtained under $\theta$. So, the following diagram remains commutative: 
\begin{center}
\begin{tikzpicture}[->,>=stealth',shorten >=1pt,auto,node distance=2cm,main node/.style={font=\sffamily\bfseries}]
\node[main node] (1) {$\Ps(k)\otimes \Ps(r_1)\otimes \dots \otimes \Ps(r_k)$};
 \node[main node] (2) [right of=1]{};
 \node[main node] (3) [right of=2]{};
 \node[main node] (4) [right of=3]{};
 \node[main node] (5) [right of=4] {$ \Ps(k)\otimes \Ps(r_1)\otimes \dots \otimes \Ps(r_k)$};
\node[main node] (6)[below of=1] {$ \Ps(r_1+....+r_k) $};
\node[main node] (7)[below of=5] {$ \Ps(r_1+....+r_k) $};

 \path[every node/.style={font=\sffamily\small}]
   (1) edge node[above] {$ id\otimes(t_1\otimes ...\otimes t_k)$} (5)
    edge node [left] {$\mu$} (6) 
   (5)edge node[right] {$ \mu$} (7)
	(6) edge node[below] {$t_1 \oplus...\oplus t_k$} (7) ;
\end{tikzpicture}.
\end{center}

The associativity axioms of a cyclic operad also holds. Therefore, on $Grav_{NY}$ there exists a natural cyclic operad structure.
\end{proof}

\begin{remark}
An explicit presentation of $Grav_{NY}(n)$, can be given by using the interpretation 
 \[Grav_{NY}(n)=s(H^{*}((\PC^{2n+3}\setminus (\Delta\bigcup_{i=1}^{n} \{x_i= \theta({x_i})=\frac{1}{2}\}) / PSL_2(\C)), \]
where $\Delta$ is the fat diagonal given by the colliding points i.e. $\{x_i=x_j\}$.

Then, it is enough to apply the works of~\cite{{DGM},{EPY},{Yuz99}}, to compute the cohomology of linear spaces minus hyperplanes.
\end{remark}
 \smallskip
 \begin{example}
We give an example on the construction of the $Grav_{NY}(n)$ operad . Consider the stratum of codimension 1. It is isomorphic to a product $\mathcal{M}_{0,2r+3}\times \mathcal{M}_{0,2s+3}$ with $(2r+2)+(2s+2)=2n+3$. Using the residue morphisms, we have that:
 \[H^{*}(\mathcal{M}_{0,2n+3}\setminus \cup_{i=1}^{n} H_i)\to H^{*-1}(\mathcal{M}_{0,2r+3}\setminus \cup_{i=1}^{r} H_i\, \times\, \mathcal{M}_{0,2s+3}\setminus \cup_{i=1}^{s} H_i )\] 
 where $H_i=\{x_i=\theta(x_i)=\frac{1}{2}\}$ are hyperplanes.
After dualization, the operad structure on the collection of graded vector spaces
$H_{*-1}(\mathcal{M}_{0,2n+3}\setminus H_i)$ is given.
\end{example}

\subsection{Comparison of the NY gravity operad with gravity operad and $Hycom$ operad}
We study the relations between the NY gravity operad and the gravity operad. Namely, a relation is given, using the comparison theorem in this section. 

The relations and the presentation for the Gravity operad are as follows. The degree 1 subspace of $Grav(k)$ is one dimensional for each $k\geq 2$, and is spanned by the operation: \[\{a_1,...,a_n\}=\sum_{1\geq i<j \geq n}(-1)^{\epsilon(i,j)}\{a_i,a_j\}a_1....\hat{a}_i....\hat{a}_j....a_n,\]
 $\epsilon(i,j)=(|a_1|+...+|a_{i-1}|)|a_i|+(|a_1|+...+|a_{j-1}|)|a_j|+|a_i||a_j|.$
Using the presentation of~\cite{Get}, the operations $\{a_1,...,a_k\}$ generate the gravity operad $Grav(k)$. All relations among them follow from the generalized Jacobi identity :
 \begin{equation}\label{E:2}
\begin{aligned}
\sum_{1\leq i<j\leq k}(-1)^{\epsilon(i,j)}\{\{a_i,a_j\}, a_1,...,\hat{a}_i,...,\hat{a}_j, \dots, a_k,&b_1,...,b_l\} \\
& =\{\{a_1,...,a_k\},b_1,...,b_l\},
\end{aligned}
\end{equation}
where the right hand side is 0 if $l=0$.


The gravity operad is related to the hypercommutative operad (in short $Hycom$ operad) by Koszul duality, in the sense of~\cite{Kap}, i.e. we have $Hycom^{!}\cong Grav$. By definition,
\[ 
Hycom(n)=\begin{cases} H_{*}(\M_{0,n}),& n \geq 3; \\ 0,& n < 3.\end{cases}
\]
 Supposing that $V\subset Hycom$ is the cyclic $\mathbb{S}$-submodule spanned by the fundamental classes. Then, 
\[[\M_{0,n}]\in H_{2(n-3)}(\M_{0,n})\subset Hycom(n).\]
The operad $Hycom$ is Koszul with generators $V$.


The properties of $Hycom$ are that it is quadratic with generators $\mathcal{V}$ and relations $R$, where $\mathcal{V}((n))$ is spanned by an element of degree $2(n-3)$ and weight $2(3-n)$. We have that $\mathcal{V}((n))$ is identified with $H_{2(n-3)}(\M_{0,n})$. Relations $R$---where $R((n))$ has dimension $n-1\choose 2$-1---are given by the following generalized associativity equation:
\begin{equation}
\sum_{S_1\sqcup S_2}\pm((a,b,x_{S_1}),c,x_{S_2})=\sum_{S_1\sqcup S_2}\pm(a,(b,c,x_{S_1}),x_{S_2}),
\end{equation} 
where $a,b,c,x_{1},...,x_{n}$ lie in $Hycom$ and $S_1\sqcup S_2=\{1,...,n\}$. The symbol $\pm$ stands for the Quillen sign convention for $ \mathbb{Z}_2$-graded vector spaces: it equals +1 if all the variables are of even degree.

Now, we can present the {\it Comparison theorem}.
\begin{theorem}[Comparison theorem]
Consider the $NY$ Gravity operad $Grav_{NY}$. 
Then, for $n\geq 3$ we have:
\[\begin{aligned}
Grav_{NY}(n)&=sH_{*}(\mathcal{M}_{0,n}\setminus Fix_{\theta})\otimes H_{*}(\mathrm{Aut}(\mathcal{M}_{0,n})\setminus Fix_{\theta})\\
\end{aligned}\]
where $ Fix_{\theta}$ is the set of fixed points of the automorphism $\theta$. 
\end{theorem}

\begin{proof}
To compare the gravity operad and the NY Gravity operad, notice that it is necessary to remove the fixed points under the involution $\theta$ (one needs smooth strata). Removing $Fix_{\theta}$ from $\mathcal{M}_{0,n}(\theta)$ leaves us to consider the smooth stratum of this space, and implies a modification regarding the classical $Grav$ operad.  
By the K\"unneth formula we have:

\[\begin{aligned}
Grav_{NY}(n)&=sH_{*}(\mathcal{M}_{0,n}\times \mathrm{Aut}(\mathcal{M}_{0,n})\setminus Fix_{\theta})\\
&=s(H_{*}(\mathcal{M}_{0,n}\setminus Fix_{\theta})\otimes H_{*}(\mathrm{Aut}(\mathcal{M}_{0,n})\setminus Fix_{\theta}))\\
&=sH_{*}(\mathcal{M}_{0,n}\setminus Fix_{\theta})\otimes H_{*}(\mathrm{Aut}(\mathcal{M}_{0,n})\setminus Fix_{\theta})
\end{aligned}\]
\end{proof}
 
\begin{remark}\,

We compare the NY operads' components to those of the gravity operad. This is done using the {\it excision theorem} and Mayer--Vietoris' exact sequence. We rely on notations of the previous examples. It follows from the excision theorem that there exists a relative homology of\, $H_{k}(\PC^{2n+3}\setminus \cup H_i,\, \mathcal{M}_{0,2n+3}\setminus \cup_{i=1}^{n} H_i)$ to $H_{k}(\PC^{2n+3},\, \mathcal{M}_{0,2n+3})$. 

 \smallskip
 
The excision theorem leads to the Mayer--Vietoris (long) exact sequence. Consider $\mathcal{M}_{0,2n+3}$ and let $A,B$ be subsets of $\mathcal{M}_{0,2n+3}$. We have 

\begin{itemize}
\item $A=\mathcal{M}^{\frac{1}{2}}_{0,2n+3}$ the moduli space $\mathcal{M}_{0,2n+3}$ from which is removed the hyperplane arrangement $\cup_{i=1}^{n} H_i$ 
\item $B$ is the tubular neighborhood of the hyperplane arrangement $\cup_{i=1}^{n}H_i$, denoted $Tub$.
\item The interiors of $A$ and $B$ cover $\mathcal{M}_{0,2n+3}$.
 \end{itemize}
Now, the long exact sequence gives:
\[\begin{aligned}
\dots \to H_{k+1}(\mathcal{M}_{0,2n+3})\to H_{k}&(Tub\setminus \cup_{i=1}^{n} H_i)\to H_{k}(\mathcal{M}^{\frac{1}{2}}_{0,2n+3})\oplus H_{k}(Tub)\\
& \to H_{k}(\mathcal{M}_{0,2n+3})\to H_{k-1}(Tub\setminus \cup_{i=1}^{n} H_i)\to \dots
\end{aligned}\]
 
Applying the suspension onto the long exact sequence, we have the following: 
\[\begin{aligned}
\dots\to sH_{k+1}(\mathcal{M}_{0,2n+3})\to sH_{k}&(Tub\setminus \cup_{i=1}^{n} H_i)\to sH_{k}((\mathcal{M}^{\frac{1}{2}}_{0,2n+3})\oplus sH_{k}(Tub)\\
& \to sH_{k}(\mathcal{M}_{0,2n+3})\to sH_{k-1}(Tub\setminus \cup_{i=1}^{n} H_i)\to \dots
\end{aligned}\]

and the long exact sequence looks in the following way: 
\[\begin{aligned}
\dots \to H_{2n+4}(\mathcal{M}_{0,2n+3})\to 0 \to H_{2n+3}((\mathcal{M}&^{\frac{1}{2}}_{0,2n+3})\to
H_{2n+2}(\mathcal{M}_{0,n}) \to \mathbb{Z}\to \\
 H_{2n+2}((\mathcal{M}^{\frac{1}{2}}_{0,2n+3}) 
&\to H_{2n+2}(\mathcal{M}_{0,2n+3})\to 0\to \dots
\end{aligned}\]
\end{remark}

\section{The neighbourhood of the fat diagonal and Hodge structure} 
Previously, we have introduced the NY Gravity operad, which we think describes an interesting geometric construction modeling an object coming from physics origins. In algebraic language all the physical requirements mean that to describe the geometry we need a specific algebra. Here we have chosen the algebra of split quaternion numbers, which fits nicely to space-time related problems, relativity and gravity.

The presence of supplementary singular points attracts our attention to how the geometry is perturbed. Those singular points appear in the realisation of the module over split quaternion numbers and in particular when it occurs to the 0-divisors of the underlying ring.


We propose to look at the neighbourhood of the fat diagonal for $\mathcal{M}_{0,n}(\theta)$ from a Hodge structure aspect. Our NY gravity operad being directly related to mixed Hodge structures, it is natural to consider the Hodge structure in a mostly geometric way, via tools coming from $L^2$-cohomology, and which are closely based on results concerning the KZ equations for $\mathcal{M}_{0,n} (\C)$ (see~\cite{Kap}), and to the Stasheff polytope approach~\cite{Ka}. 


\subsection*{The fat diagonal of $\mathcal{M}_{0,n}$ with hidden symmetry}
Let $M$ be a smooth manifold.
Let \[ [0,1]\times M /\sim,\] where $(0,x) \sim (0,x')$ and $x, x'\in M$ be a (topological) cone. 
Let us stratify the thick diagonal in the following (rather unconventional) way: 
\begin{equation}\label{E:D}\Delta^{(n)} \subset \Delta^{(n-1)} \subset \dots\subset\Delta^{(1)},\end{equation} where $\Delta^{(1)}$ is the $n$--tuple of $\Delta$, where there exists one $x_i$ in an $\epsilon$-neighborhood of $\frac{1}{2}$, different from all the other $x_j$ in $\Delta$;
$\Delta^{(j)}$ is the $n$--tuple, where $\{x_{i_{1}}=\dots =x_{i_{j}}\}$ are in an $\epsilon$-neighborhood of $\frac{1}{2}$, and those $x_{i}$ are different from the remaining $x_k$ in $\Delta$.

\begin{proposition}
Let $\Delta_{\theta}$ be the fat diagonal of $\M_{0,n}(\theta)$. 
Then, $\Delta_{\theta}$ has a $\mathbb{Z}_2$-symmetry about the locus in $\Delta_{\theta}$ corresponding to $Fix_{\theta}$. Locally, the locus $Fix_{\theta}$ defines the apex of a cone which can be decomposed into two symmetrical subcones. 
\end{proposition}
\begin{proof}
The space $\M_{0,n}(\theta)$ being equipped with the split quaternions structure, it subdivides into a pair of isomorphic (and symmetric) subspaces. The singular locus (i.e. fat diagonal) of this spaces obeys naturally to the same property. Therefore, $\Delta_{\theta}$ is divided into symmetrical subspaces, which we call respectively $\Delta_{+}$ and $\Delta_{-}$
Consider the stratum $\Delta^{(1)}$, defined in formulae~\ref{E:D}. 
Then, in the stratum $\Delta_{+}^1$ there exists one point $x_i$ tending to $Fix_{\theta}$. In $\Delta_{-}^{1}$, we have its symmetric version (recall examples of section~\ref{s:sdm}). So, in a small neighborhood of $Fix_{\theta}$, we have a topological cone of apex $Fix_{\theta}$ defined over the intersection of a small sphere with center in $Fix_{\theta}$ and $\Delta_{+}^1\cup \Delta_{-}^1$. By induction we can see that the same holds for strata of higher codimension. We have thus defined a cone of apex $Fix_{\theta}$ which can be sliced into a pair of symmetrical subcones. 
\end{proof}

The fat diagonal of the parametrizing scheme $\M_{0,n}\times \mathrm{Aut}(\M_{0,n})$ is endowed with additional critical points, compared to the classical case (i.e. with no symmetry). 
Going back to the stable curve, these additional points will be blown-up, adding new irreducible components (isomorphic to $\PC^1$). This data modifies the geometric aspect around the divisor of $\M_{0,n}\times \mathrm{Aut}(\M_{0,n})$, compared to the one of the stable curve $\M_{0,n}$. Indeed, those points, lying in $Fix_{\theta}$, are conjugated to complex germs of the type $(z^{2k},0)$, where $1\leq k\leq n$. Therefore, not only this modifies the stratification of the stable curves, (comparing it to the one of $\M_{0,n}$), but also implies differences in the metrics around the divisors.

\subsection{The neighborhood of the fat diagonal in $\mathcal{M}_{0,n}(\theta)$}


We consider the neighborhood of the fat diagonal in $\mathcal{M}_{0,n}(\theta)$. One approach is done using K\"ahler geometry; the second one using Riemannian geometry. Both approaches describe the split quaternionic structure and lead in the end to the cohomology ring of the investigated space. 
 
 \smallskip

Recall some notions. Let $Conf_{0,n}$ be the configuration space of n marked points on the complex plane. 
For $1\leq j \neq k \leq n$, let $w_{jk}=\frac{dlog(x_j-x_k)}{2\pi\imath}$ be the logarithmic differential form.
The cohomology ring $H^{*}(Conf_{0,n},\mathbb{Z})$ is the graded commutative ring with generators $[w_{jk}]$, and relations:

\begin{itemize}
\item $w_{jk}=w_{kj}$
\item $w_{ij}w_{jk}+w_{jk}w_{ki}+w_{ki}w_{ij}=0$
\end{itemize}

The cohomology ring $H^{*}(\mathcal{M}_{0,n+1}, \C)$
may be identified with the kernel of the differential $\imath$ on $H^{*}(Conf_{0,n},\C)$ whose action on the generators is $\imath w_{jk}=1$.

The K\"ahler logarithmic derivatives approach is of interest to us, because not only it is related to the KZ equations but also to $L^2$--cohomology and thus in a certain sense to the Hodge structure. We wish to highlight the bridges between those seemingly different view points. The generators $[w_{jk}]$ of the cohomology ring $H^{*}(Conf_{0,k+1})$ are induced by $w_{jk}=\frac{dlog(x_j-x_k)}{2\pi\imath}$ and those logarithmic derivatives are also a way to have some generators for the $L^2$--cohomology. 

An incomplete Riemannian metric over a given smooth manifold (for instance $\mathcal{M}_{0,n}$) can be obtained using a projective embedding. Recall its construction. If $p:\mathbb{C}^{n+1}-0\to \mathbb{P}^n$ is the projection where $(x_{1},\dots,x_{n})\in \mathbb{C}^{n+1}$, then 
$\mathbb{P}^n$ is endowed with a natural K\"ahler metric $\nu$, called Fubini-Study metric and defined by: 
\[p^{*}\nu=\frac{i}{2\pi}\partial\overline{\partial}log | |x_{1}|^2+ |x_{2}|^2 +\dots + |x_{n}|^2 |.\]
This gives a point-wise norm on smooth forms $\omega$ of type $(p,q)$ on $\mathcal{M}_{0,n}$, and defines an $L^{2}$ norm $||\omega||_{2}$.
One defines thus a simplicial complex by setting $\mathcal{F}^{p,q}(\mathcal{M}_{0,n}):=\{\omega | ||\omega||_{2}<\infty, ||\bar{\partial}\omega||_{2}<\infty\}$. A Dolbeault type of complex, $(\mathcal{F}^{p,*}(\mathcal{M}_{0,n+1}),\bar{\partial})$ for each $p \geq 0$ can be defined, from this definition. The existence of such a complex allows the definition of an $q$-th $L^{2}$-cohomology group. It is usually denoted by $H_{{2}}^{p,q}(\mathcal{M}_{0,n})$, in the literature. See for more details~\cite{{PS1},{PS2}}.

A subtile change of variables leads to define locally a K\"ahler metric on the smooth part of the moduli space $\mathcal{M}_{0,n+1}$, in the neighborhood of the fat diagonal $\{x_i=x_j| i\neq j\}$. This construction allows to define a norm with respect to the metric. Using this tool, we can apply results from~\cite{{DF},{D},{O}} to describe the $L^2$-cohomology. 
 
Choose a ball $B_{r}$, centered at 0 and of radius $r$, such that $\sum_{i=1}^{n}|x_{i}|^{2}\ll1$. Consider the Euclidean distance between a pair of points $x_i$ and $x_j$, where those points lie in the interior of the ball of radius $r$. 
Set 
\begin{equation}
F= -log(dist(x_i,x_j)),  
\end{equation}
and 
\begin{equation}
F_{k}:= -log(dist(x_i,x_j))-\frac{1}{k}log(-log\sum|x_{i}|^2)
\end{equation}
where $x_i,x_j\in B_{r}$ and $k>1$.

The latter strictly pluri-subharmonic function defines a K\"ahler metric $h_{k}:=-i\partial\bar{\partial}F_{k}$ on the regular part of $\mathcal{M}_{0,n+1}(\theta)\setminus Fix_{\theta}\cap B_{r}$. The metric $h_{k}:=-i\partial\bar{\partial}F_{k}$ on $\mathcal{M}_{0,n+1}(\theta)\setminus Fix_{\theta}$ is complete and decreases monotonically to $h:=-i\partial\bar{\partial}F$. Independently of $k$, $\langle \partial F_k,\partial F_k\rangle$ is bounded, where $\langle , \rangle_{k}^{\frac{1}{2}}$ denotes the pointwise norm on 1-forms with respect to $h_k$.

The following propositions got modified and updated regarding our explanations concerning the investigated structure
\begin{proposition}\label{P:PS1}
Let $\mathcal{M}_{0,n+1}(\theta)\setminus Fix_{\theta}$ be endowed with a K\"ahler metric, which is given by the potential function $F:\mathcal{M}_{0,n+1}(\theta)\setminus Fix_{\theta}\to \R$ such that 
$\langle \partial F,\partial F\rangle$ is bounded. Then, the $L_2-\overline{\partial}$-cohomology with respect to $\omega$, $H_{(2)}^{p,q}(\mathcal{M}_{0,n+1}(\theta)\setminus Fix_{\theta},\omega)=0$ for $p+q\neq n+1$. In fact, if $\langle \partial F,\partial F \rangle^{\frac{1}{2}}\leq B$, and $\phi$ is a $(p,q)$-form on $\mathcal{M}_{0,n+1}(\theta)\setminus Fix_{\theta}$ with $\overline{\partial}\phi=0$, $q>0$ and $p+q\neq 0$, then there is a $(p,q-1)$-form $\nu$ such that $\overline{\partial}\nu=\phi$ and $||\nu||\leq 4B||\phi||$.
\end{proposition}

 This statements given in Proposition~\ref{P:PS1} and Proposition~\ref{P:PS2} are a direct adaptation of Lemma 2.2 and Lemma 2.3 from~\cite{PS2} to our setting. Globally speaking, statements in Lemma (2.2) and (2.3) from~\cite{PS2} holds for any complex manifold. This is interesting for us here since $\mathcal{M}_{0,n+1}(\theta)\setminus Fix_{\theta}$ is still considered as complex variety although it is a realisation of the corresponding module over the split quaternion numbers. 

\begin{proposition}\label{P:PS2}
Let $\mathcal{M}_{0,n+1}(\theta)\setminus Fix_{\theta}$ be endowed with a decreasing sequence of complete hermitian metrics $h_k$, $k\geq 1$, which converges pointwise to a hermitian metric $h$. If $H_{(2)}^{n+1,q}(\mathcal{M}_{0,n+1}(\theta)\setminus Fix_{\theta},h_k)$ vanishes with an estimate that is independent of $k$, then $H_{(2)}^{n+1,q}(\mathcal{M}_{0,n+1}(\theta)\setminus Fix_{\theta},h)$ vanishes with an estimate. 
\end{proposition}
Here, $H_{(2)}^{n+1,q}(\mathcal{M}_{0,n+1}(\theta)\setminus Fix_{\theta},h_k)$ denotes $L_2-\overline{\partial}$-cohomology with respect to the metric $h_k$.

Hence, these results provide interesting results concerning the K\"ahler geometry for the realisation of the split quaternionic structure, in our context.
We now continue our investigations, but from an Euclidean geometry point of view. Note that the approach below holds no longer for the parametrizing scheme $T$, but for the universal curve, where colliding points have already been blown-up.

 \subsection{Riemannian geometry in the neighbourhood of the divisor}
In this subsection, given parametrizing scheme $T$, consider the universal curve and study the Riemannian geometry around the divisor $\Delta_{\theta}$. 
We highlight the geometric differences in terms of Riemannian geometry around the divisors of $\M_{0,n+1}$ and of $\M_{0,n+1}(\theta)$. The creation of additional singularities due to the split quaternionic structure model implies modifications around the geometry, which we want to understand better. 
 
To avoid any notation confusion: let $D^{\tau}$ be a given divisor of the $n+1$-stable curve $\M_{0,n+1}$, parametrized by a subscheme $D(\tau)$ in $T$. For this subscheme in $T$, let $D_{\tau}$ be the divisor of $NY(n+1)$. The notation $D$ is used for a normal crossing divisor.

The aim of the next proposition is to give an idea of the geometric differences occurring in $\M_{0,n+1}$ and $\M_{0,n+1} \times \mathrm{Aut}(\M_{0,n+1})$.
We introduce the classical filtration: $\Delta_{(0)}\subset \Delta_{(1)} \subset \dots \Delta_{(n)}$, where an $i$-th (smooth) stratum is given by $\Delta_{(i)} - \Delta_{(i-1)}$ (note that this is a different stratification than in the previous section). The analogous stratification is done for the fat diagonal $\Delta_{\theta}$ of $\M_{0,n+1} \times \mathrm{Aut}(\M_{0,n+1})$.

In order to understand the neighbourhood--geometry of $\Delta_{\theta}$ in $\M_{0,n}(\theta)$ we apply some results of Hsiang--Pati~\cite{HP85} and Nagase~\cite{Na89}. In the following part, we assume that we intersect the fat diagonal with a $n$-ball of radius 1 centered at 0. It is known that this intersection behaves as a topological cone defined over the boundary of this intersection.
We claim the following:

\begin{proposition}\label{P:a}
Let us stratify the singular locus $\Delta(\theta)$ as above. Let $D_+$ and $D_{-}$ be the connected components of $D_\theta$ symmetric to each other about the component $D_{Fix_{\theta}}$ corresponding to $Fix_{\theta}$. 
Suppose that $(r,\Theta,y) \in (0,1]\times[0,1]\times Y$ and $\tilde{g}(y)$ is Riemannian metric on a manifold $Y$. 
Then, on each stratum of $\Delta_{\theta}$ there exists an isolated (conical) singularity of type $A_k$. For a suitable system of coordinates, we describe the Riemannian metric in the neighbourhood of $D_{\theta}$ as follows:
\[ 
\begin{cases}
\text{The neighborhoof of}\quad D_+: & dr^2+r^2d\Theta^2+r^{2c}\tilde{g}(y). \\ 
\text{The neighborhoof of}\quad D_-: & dr^2+r^2d\Theta^2+r^{2c}(ds^2+h^{2}(r,s)d\phi), \\
& h\, \text{is a smooth function.} \\
\text{The neighborhoof of}\quad D_{Fix_{\theta}}: & dr^2+r^2d\Theta^2+r^{2}\tilde{g}(y)
 \end{cases}\]
\end{proposition}
\begin{remark}
In the statement above, the notation for the Riemannian metric $\tilde{g}(y)$ is used in a large sense, i.e. depends completely on the neighbourhood of the divisor. 
\end{remark}


 For the reader's convenience, let us recall the construction of Hsiang--Pati and Nagase metric around $D$ as in \cite{HP85,Na89}. Roughly speaking the Hsiang--Pati/ Nagase metric is of the type $dr^2+r^2d\Theta^2+r^{2c}\tilde{g}(y)$ where $(r,\Theta,y) \in (0,1]\times[0,1]\times Y$ and $\tilde{g}(y)$ is Riemannian metric. However, we distinguish the Hsiang--Pati metric from the Nagase, for the following technical reason. The Hsiang--Pati metric is defined around the divisor's components (but not around intersections of the divisors' components) and the Nagase metric is meant to be used on the remain complementary part (i.e. around the intersection of the divisors' components).




The metrics are given as follows. 
\begin{enumerate}
\item Let $Y$ be a compact polygon in $ \mathbb{R}^2$ with standard metric $\tilde{g}$.\\
Let $W_{HP}=(0,1]\times[0,1]\times Y \ni (r,\Theta,y)$, be endowed with the Riemannian metric: \[g_{HP}:=dr^2+r^2d\Theta^2+r^{2c}\tilde{g}(y).\]
\item Let $W_{N}=(0,1]\times [0,1]^3$ be endowed with the Riemannian metric: \[g_{N}:=dr^2+r^2d\Theta^2+r^{2c}(ds^2+h^{2}(r,s)d\phi),\]
with $h(r,s)=\frac{f(r)}{l(\frac{s}{f(r)})}$, where $f(r)$ is a smooth function on $[0,1]$ such that $f'(r)\geq 0, \forall r\geq 0$ and $l(x)$ is a smooth function on $[0,\infty)$ such that $l'(x)\geq 0$ and $l''(x)\geq 0$ for any $x\geq0$. 
We define these two functions as : 

\begin{itemize}
\item $f(r)=r^b$, if $r$ is small and $r>0, b>0$.
\item $f(r)=\frac{1}{2}$ if $r$ is large and $r\leq 1$. 
\end{itemize}

\begin{itemize}
\item $l(x)=1$, if $0\leq x\leq 1-\epsilon$.
\item$l(x)=x$ if $1+\epsilon \leq x$.
\end{itemize}

\end{enumerate}

The $W_{HP}$ and $W_{N}$ are models obtained up to quasi-isometry of the subsets defining a tubular neighborhood around the divisor.
 By quasi-isometry we mean that for Riemannian manifolds $(Y_{i},g_{i})$, the diffeomorphism $f:(Y_{1},g_{1})\to(Y_{2},g_{2})$ satisfies for a positive constant ${\it a}>0$ the inequality: $C^{-1}g_{1}\leq f^{*}g_{2} \leq {\it a} g_{1}$.

\smallskip 
We can now conclude with the proof of Proposition~\ref{P:a}.
\begin{proof}
Using the classical stratification for $\Delta_{\theta}$ we may notice that on each stratum, there exists a point lying in $Fix_{\theta}$. In particular, for each codimension $k$ stratum there exists an isolated singularity of type $A_{k-1}$. Blowing-up this point, we can describe the geometry around its irreducible components, using the Hsiang--Pati and Nagase metrics. Now, following some technical computations, we find out that the coefficient for the metric defined around the locus $D_{Fix_{\theta}}$ is $c=1$. 
 \end{proof}
 

\end{document}